\newtheorem{theorem}{Theorem}[section]
\newtheorem{proposition}[theorem]{Proposition} 
\newtheorem{corollary}[theorem]{Corollary}
\newtheorem{lemma}[theorem]{Lemma}
\newtheorem{remark}[theorem]{Remark}
\newtheorem{remarks}[theorem]{Remarks}
\newtheorem{remark&definition}[theorem]{Remark and Definition}
\newtheorem{question}[theorem]{Question}
\newtheorem{definition}[theorem]{Definition}
\begin{document}

\title[Lebesgue decomposition for positive operators]{Lebesgue decomposition for positive operators revisited}
\author{Yoshiki Aibara and Yoshimichi Ueda}
\address{
Graduate School of Mathematics, Nagoya University, 
Furocho, Chikusaku, Nagoya, 464-8602, Japan
}
\email{
(YA) y.aibara.math95@gmail.com; 
(YU) ueda@math.nagoya-u.ac.jp
}
\date{\today}
\thanks{This work was supported in part by Grant-in-Aid for Scientific Research (B) JP18H01122.}
\begin{abstract} 
We explain how Pusz--Woronowicz's idea of their functional calculus fits the theory of Lebesgue decomposition for positive operators on Hilbert spaces initially developed by Ando. In this way, we reconstruct the essential and fundamental part of the theory.  
\end{abstract} 

\maketitle

\section{Introduction} 

Lebesgue decomposition for positive operators on Hilbert spaces was introduced by Ando \cite{Ando76} in the mid 70s, and then has been studied so far by many hands. It should be regarded as an attempt to generalize the Douglas decomposition theorem (see e.g., \cite[Theorem 2.1]{FW71}) and has a strong connection to the notion of parallel sum, which was originally motivated from electronic networks but has played an important role in the study of operator ranges illustrated by Fillmore and Williams \cite{FW71} as well as in the study of binary operations such as operator connections/means. The aim of these notes is to give a concise exposition on our idea towards yet another approach to Lebesgue decomposition, which is, in some sense, a reorganization of various ideas implicitly appeared in previous works by means of Pusz--Woronowicz functional calculus \cite{PW75} (see \cite{HU21,HUW22} for its operator-oriented treatment). We hope that the idea will further be investigated by specialists in the direction. 

Let $A, B$ be positive bounded operators on a Hilbert space $\mathcal{H}$. Ando introduced the concept that $A$ and $B$ are mutually singular, say $A \perp B$, and the one that $B$ is $A$-absolutely continuous, say $B \lessdot A$. He then introduced the maximal $A$-absolutely continuous part $[A]B$, and of course, $[A]B \perp (B-[A]B)$ holds true. Our initial observation is that $(A,B) \mapsto [A]B$ is a binary operation and can be understood as a very special case of Pusz--Woronowicz functional calculus; see Theorem \ref{T3.8}. This observation was implicitly appeared in \cite{HUW22}, but has not systematically be investigated so far. The observation explains that Pusz--Woronowicz's original construction of their functional calculus is essentially the same as von Neumann's famous clever proof (\cite[Lemma 3.2.3]{vN40-1} for its original form; \cite[Theorem 6.10]{Rudin} for its exposition) of Lebesgue--Radon--Nikodym theorem for measures based on Hilbert space techniques. See Remark \ref{R3.4}. Moreover, it also enables us to investigate the maximal $A$-absolutely continuous part $[A]B$ as Pusz--Woronowicz functional calculus. For example, it is rather straightforward in our approach to prove Kosaki's formula (that is now available in \cite{Ko})
\[
[A_1\otimes A_2](B_1\otimes B_2) = [A_1]B_1\otimes[A_2]B_2.
\] 
See Corollary \ref{C3.9}. Moreover, the mechanism behind this formula becomes completely clear in our approach. 

The previous works \cite{HU21},\cite{HUW22} and this work altogether show that the Pusz--Woronowicz functional calculus is a right framework to discuss all the well-established binary operations for positive operators, operator means/connections, operator perspectives and the maximal absolutely continuous part, in a unified way. Moreover, all those binary operations are special cases of the Pusz--Woronowicz functional calculus. 

Part of the contents of these notes are originally conducted as part of the first author's thesis project under the second author's guidance. 

\medskip\noindent
{\bf Notation.} In these notes, $\mathcal{H}$, $\mathcal{H}_i$ and $\mathcal{K}$ denote Hilbert spaces, $B(\mathcal{H})$ all the bounded operators on $\mathcal{H}$, and $B(\mathcal{H})_*^+$ all the positive normal linear functionals on $B(\mathcal{H})$, i.e., $X \in B(\mathcal{H}) \mapsto \mathrm{Tr}(DX)\in \mathbb{C}$ with positive trace-class operator $D$ on $\mathcal{H}$, where $\mathrm{Tr}$ is the usual trace. The kernel and the range of an operator $A$ are denoted by $\mathrm{ker}(A)$ and $\mathrm{ran}(A)$, respectively. The closure of $\mathrm{ran}(A)$ is denoted by $\overline{\mathrm{ran}}(A)$.  

\medskip\noindent
{\bf Acknowledgements.} We would like to acknowledge Fumio Hiai for letting us know Kosaki's formula and also for his comments to a draft version of these notes. We would also like to acknowledge Hideki Kosaki for showing us a draft version of \cite{Ko}, including many interesting facts based on unbounded operator technologies, after sending him a draft version of these notes.   

\section{Pusz--Woronowicz functional calculus}

Let $\phi(x,y)$ be a real-valued homogeneous Borel function over $[0,\infty)^2$ that is bounded 
on any compact subset. We will use the formulation of \emph{Pusz--Woronowicz functional calculus} (\emph{PW-functional calculus} for short) $\phi(A,B)$ with two positive bounded operators $A,B$ on a Hilbert space given in \cite[Definition 4.1]{HUW22} (that originates in \cite[Remark 10]{HU21}), that is, a unique extension of the usual functional calculus for commuting pairs of bounded positive operators in such a way that the following property (called \emph{operator homogeneity}) holds: If a bounded operator $C : \mathcal{K} \to \mathcal{H}$ satisfies $\overline{\mathrm{ran}}((A+B)^{1/2}) \subseteq \overline{\mathrm{ran}}(C)$, then 
\[
\phi(C^* AC, C^* BC) = C^* \phi(A,B)C. 
\]
(We need the notion of extended lower semibounded self-adjoint part $\widehat{B(\mathcal{H})}_\mathrm{lb}$ when $\phi(x,y)$ is just assumed to be bounded from below on each compact subset of $[0,\infty)^2$; see \cite{HUW22}.) 

\medskip
We will explain an explicit construction for a not necessarily commuting pair $A,B$ of positive bounded operators on a Hilbert space $\mathcal{H}$. This construction itself is important in our approach to Lebesgue decomposition for positive operators. 

Write $\mathcal{H}_{A,B}\coloneqq\mathrm{ker}(A+B)^\perp = \mathrm{ker}((A+B)^{1/2})^\perp = \overline{\mathrm{ran}}((A+B)^{1/2})$, and define $T_{A,B} : \mathcal{H} \to \mathcal{H}_{A,B}$ by $\xi \in \mathcal{H} \mapsto (A+B)^{1/2}\xi \in \mathcal{H}_{A,B}$, which has a dense range by definition. The mappings 
\[
\left(A + B\right)^{1/2}\xi \mapsto A^{1/2} \xi, \quad \left(A + B\right)^{1/2}\xi \mapsto B^{1/2} \xi \quad  (\xi \in \mathcal{H})
\]
uniquely extend to contractive operators $X_{A,B}, Y_{A,B}$ from $\mathcal{H}_{A,B}$ to $\mathcal{H}$ such that $X_{A,B} T_{A,B} = A^{1/2}$, $Y_{A,B} T_{A,B} = B^{1/2}$.
We define two bounded operators $R_{A,B},S_{A,B}$ on $\mathcal{H}_{A,B}$ by $R_{A,B} \coloneqq |X_{A,B}|^2 = X_{A,B}^* X_{A,B}$ and $S_{A,B} \coloneqq |Y_{A,B}|^2 = Y_{A,B}^* Y_{A,B}$.
Let $X_{A,B} = U_{A,B} |X_{A,B}|, Y_{A,B} = V_{A,B}|Y_{A,B}|$ be the polar decompositions of $X_{A,B}, Y_{A,B}$, respectively.

\begin{lemma}[{\cite[Theorem 1.1]{PW75}; see also \cite[Lemma 1]{HU21}}] \label{L2.1}
$R_{A,B} + S_{A,B} = 1_{\mathcal{H}_{A,B}}$. In particular, $(R_{A,B},S_{A,B})$ is a commuting pair of positive bounded operators on $\mathcal{H}_{A,B}$. 
\end{lemma}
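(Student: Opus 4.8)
The plan is to establish the operator identity by comparing the associated quadratic forms on the dense subspace $\mathrm{ran}(T_{A,B}) \subseteq \mathcal{H}_{A,B}$ and then passing to the whole space by continuity. Concretely, I would first observe that $R_{A,B}+S_{A,B}$ is a bounded operator and that $\mathrm{ran}(T_{A,B})$ is dense in $\mathcal{H}_{A,B}$ (this is exactly the density of the range recorded in the setup). Hence the continuous function $\zeta \mapsto \langle (R_{A,B}+S_{A,B})\zeta,\zeta\rangle$ is determined by its values on $\mathrm{ran}(T_{A,B})$, and it will suffice to prove
\[
\langle (R_{A,B}+S_{A,B})\zeta,\zeta\rangle = \|\zeta\|^2 \qquad (\zeta \in \mathrm{ran}(T_{A,B})).
\]
Once this is shown, the equality extends to all $\zeta \in \mathcal{H}_{A,B}$ by continuity, and then the polarization identity in the complex Hilbert space $\mathcal{H}_{A,B}$ forces $R_{A,B}+S_{A,B}-1_{\mathcal{H}_{A,B}} = 0$, which is the assertion.

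The core is a one-line computation. Taking $\zeta = T_{A,B}\xi = (A+B)^{1/2}\xi$ for $\xi \in \mathcal{H}$, I would use the defining relations $X_{A,B}T_{A,B}=A^{1/2}$ and $Y_{A,B}T_{A,B}=B^{1/2}$ together with $R_{A,B}=X_{A,B}^*X_{A,B}$, $S_{A,B}=Y_{A,B}^*Y_{A,B}$ to obtain
\[
\langle R_{A,B}\zeta,\zeta\rangle = \|X_{A,B}T_{A,B}\xi\|^2 = \|A^{1/2}\xi\|^2 = \langle A\xi,\xi\rangle,
\]
and likewise $\langle S_{A,B}\zeta,\zeta\rangle = \|B^{1/2}\xi\|^2 = \langle B\xi,\xi\rangle$. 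Adding these and comparing with $\|\zeta\|^2 = \|(A+B)^{1/2}\xi\|^2 = \langle (A+B)\xi,\xi\rangle = \langle A\xi,\xi\rangle + \langle B\xi,\xi\rangle$ yields the desired equality on $\mathrm{ran}(T_{A,B})$ verbatim.

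For the ``in particular'' clause, once $R_{A,B}+S_{A,B}=1_{\mathcal{H}_{A,B}}$ is established, I would simply note that $S_{A,B}=1_{\mathcal{H}_{A,B}}-R_{A,B}$ is a (first-degree) polynomial in $R_{A,B}$, so $R_{A,B}$ and $S_{A,B}$ commute, while both are positive by their construction as operators of the form $C^*C$. I do not expect a genuine obstacle: the only point needing care is the step from the dense subspace to all of $\mathcal{H}_{A,B}$, which rests precisely on the boundedness of $X_{A,B},Y_{A,B}$ (hence of $R_{A,B},S_{A,B}$) and on the density of $\mathrm{ran}(T_{A,B})$, both already in hand. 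The contractivity of $X_{A,B},Y_{A,B}$ plays no role in the identity itself beyond ensuring that these operators are well defined and bounded.
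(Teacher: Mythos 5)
Your proof is correct, and it is essentially the standard argument behind this lemma: the paper itself gives no proof, deferring to \cite[Theorem 1.1]{PW75} and \cite[Lemma 1]{HU21}, where exactly your computation appears --- comparing the quadratic forms of $R_{A,B}+S_{A,B}$ and the identity on the dense subspace $\mathrm{ran}(T_{A,B})$ via $\|A^{1/2}\xi\|^2+\|B^{1/2}\xi\|^2=\|(A+B)^{1/2}\xi\|^2$, then extending by continuity and polarization. Nothing is missing; your handling of the density and self-adjointness points is exactly right.
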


\begin{lemma}\label{L2.2}
Denote $\mathfrak{I}_{A, B} \coloneqq \{C \in B(\mathcal{H})\:;\: O \leq C \leq A + B\}$ and $\mathfrak{J}_{A,B} \coloneqq \{\tilde{C} \in B(\mathcal{H}_{A,B})\:;\: O \leq \tilde{C} \leq I_{\mathcal{H}_{A,B}}\}$.
The map $\Gamma_{A,B} : \mathfrak{J}_{A,B} \to \mathfrak{I}_{A,B}$ defined by
\[
\Gamma_{A,B}(\tilde{C}) = T_{A,B}^*\,\tilde{C}\,T_{A,B}
\]
is bijective and satisfies $\Gamma_{A,B}(R_{A,B}) = A$, $\Gamma_{A,B}(S_{A,B}) = B$ and $\Gamma_{A,B}(I_{\mathcal{H}_{A,B}}) = A + B$. Moreover, both $\Gamma_{A,B}$ and its inverse map $\Gamma_{A,B}^{-1}$ are order-preserving.
\end{lemma}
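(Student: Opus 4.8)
The plan is to separate the routine structural claims from the one genuinely substantial point, which is surjectivity. Abbreviate $T \coloneqq T_{A,B}$ and $\Gamma \coloneqq \Gamma_{A,B}$. The computational backbone is the identity $T^* T = A+B$ on $\mathcal{H}$, which follows directly from $T\xi = (A+B)^{1/2}\xi$ together with the observation that for $\eta \in \mathcal{H}_{A,B}$ one has $T^*\eta = (A+B)^{1/2}\eta$. First I would check that $\Gamma$ really lands in $\mathfrak{I}_{A,B}$: for $0 \le \tilde{C} \le I_{\mathcal{H}_{A,B}}$ the operator $T^* \tilde{C}\, T$ is positive, and the monotonicity of $\tilde{C} \mapsto T^* \tilde{C}\, T$ gives $T^* \tilde{C}\, T \le T^* I_{\mathcal{H}_{A,B}} T = A+B$. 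That same monotonicity shows at once that $\Gamma$ is order-preserving.

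Next I would record the three displayed identities. The identity $\Gamma(I_{\mathcal{H}_{A,B}}) = A+B$ is exactly $T^* T = A+B$. For the other two I use $X_{A,B} T = A^{1/2}$ and $Y_{A,B} T = B^{1/2}$: then
\[
\Gamma(R_{A,B}) = T^* X_{A,B}^* X_{A,B} T = (X_{A,B} T)^*(X_{A,B} T) = A^{1/2} A^{1/2} = A,
\]
and likewise $\Gamma(S_{A,B}) = (Y_{A,B} T)^*(Y_{A,B} T) = B$. Injectivity is then a quadratic-form argument: if $\Gamma(\tilde{C}_1) = \Gamma(\tilde{C}_2)$ then $\langle(\tilde{C}_1 - \tilde{C}_2) T\xi, T\xi\rangle = 0$ for all $\xi \in \mathcal{H}$, and since $\mathrm{ran}(T) = \mathrm{ran}((A+B)^{1/2})$ is dense in $\mathcal{H}_{A,B}$, continuity and polarization force $\tilde{C}_1 = \tilde{C}_2$.

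The main obstacle is surjectivity, and here I would run the von Neumann--type construction the introduction alludes to. Given $C$ with $0 \le C \le A+B$, define a sesquilinear form on the dense subspace $\mathrm{ran}(T)$ by $[T\xi_1, T\xi_2] \coloneqq \langle C\xi_1, \xi_2\rangle$. The key point making this well defined is the inclusion $\mathrm{ker}(A+B) \subseteq \mathrm{ker}(C)$, which follows from $0 \le C \le A+B$ (if $(A+B)\zeta = 0$ then $0 \le \langle C\zeta,\zeta\rangle \le \langle(A+B)\zeta,\zeta\rangle = 0$, whence $C\zeta = 0$); since $C$ is self-adjoint, this makes the value of the form depend only on $T\xi_1, T\xi_2$ and not on the representatives $\xi_1, \xi_2$. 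The Cauchy--Schwarz inequality for the positive form $C$ together with $\langle C\xi,\xi\rangle \le \langle(A+B)\xi,\xi\rangle = \|T\xi\|^2$ bounds the form by $\|T\xi_1\|\,\|T\xi_2\|$, so it extends continuously to a bounded Hermitian form of norm $\le 1$ on all of $\mathcal{H}_{A,B}$.

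Its Riesz representative $\tilde{C}$ then satisfies $0 \le \tilde{C} \le I_{\mathcal{H}_{A,B}}$, i.e. $\tilde{C} \in \mathfrak{J}_{A,B}$, and by construction $\langle \Gamma(\tilde{C})\xi,\xi\rangle = \langle \tilde{C}\, T\xi, T\xi\rangle = \langle C\xi,\xi\rangle$ for all $\xi$, so $\Gamma(\tilde{C}) = C$ by polarization. Finally, the order-preservation of $\Gamma^{-1}$ is the same density-and-continuity argument as for injectivity: if $C_1 \le C_2$ in $\mathfrak{I}_{A,B}$ with preimages $\tilde{C}_1, \tilde{C}_2$, then $\langle(\tilde{C}_2 - \tilde{C}_1)T\xi, T\xi\rangle = \langle(C_2 - C_1)\xi,\xi\rangle \ge 0$ on the dense range of $T$, which gives $\tilde{C}_1 \le \tilde{C}_2$. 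I expect no difficulty beyond surjectivity, where the only subtlety is verifying well-definedness of the form, exactly the place the kernel inclusion $\mathrm{ker}(A+B)\subseteq\mathrm{ker}(C)$ is needed.
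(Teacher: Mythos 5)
Your proof is correct, and all of it except the surjectivity step coincides in substance with the paper's argument: the identities $\Gamma_{A,B}(R_{A,B})=A$, $\Gamma_{A,B}(S_{A,B})=B$, $\Gamma_{A,B}(I_{\mathcal{H}_{A,B}})=A+B$ follow from $X_{A,B}T_{A,B}=A^{1/2}$, $Y_{A,B}T_{A,B}=B^{1/2}$, $T_{A,B}^*T_{A,B}=A+B$, and injectivity and the order-preservation of $\Gamma_{A,B}^{-1}$ are the same density-of-$\mathrm{ran}(T_{A,B})$ arguments the paper uses. Where you genuinely diverge is surjectivity. The paper invokes the Douglas decomposition theorem: given $C\in\mathfrak{I}_{A,B}$, since $C\leq A+B$ there is a unique contraction $D$ with $(A+B)^{1/2}D=C^{1/2}$; after cutting $D$ by the support projection of $(A+B)^{1/2}$ so that $D:\mathcal{H}\to\mathcal{H}_{A,B}$, one gets $D^*T_{A,B}=C^{1/2}$ and takes $\tilde{C}=DD^*$. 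You instead build $\tilde{C}$ directly as the Riesz representative of the sesquilinear form $[T_{A,B}\xi_1,T_{A,B}\xi_2]\coloneqq(C\xi_1,\xi_2)$ on the dense range of $T_{A,B}$, with well-definedness secured by the kernel inclusion $\ker(A+B)\subseteq\ker(C)$ (your argument for it is right: $(C\zeta,\zeta)=0$ forces $C^{1/2}\zeta=0$) and boundedness by Cauchy--Schwarz for the positive form $C$. Both are complete proofs; in fact your form-level construction is essentially the standard proof of the special case of Douglas's theorem being used, so you have traded a citation for a short self-contained argument, and your route is exactly the von Neumann/Pusz--Woronowicz mechanism the paper's introduction highlights as the conceptual core. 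What the paper's route buys is brevity and a concrete factorization $C=(D^*T_{A,B})^*(D^*T_{A,B})$ with $\tilde{C}=DD^*$ exhibited explicitly, at the cost of the external reference (which the paper cites anyway); what yours buys is independence from Douglas and a formulation that generalizes verbatim to the unbounded-form setting. One small remark: your claim $0\leq\tilde{C}$ deserves the one-line justification that positivity of the extended form on the dense range passes to the closure by continuity --- you implicitly use this, and it is harmless, but worth stating.
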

\begin{proof}
(Injectivity) If $\tilde{C}, \tilde{D} \in \mathfrak{J}_{A,B}$ are such that $\Gamma_{A,B}(\tilde{C}) = \Gamma_{A,B}(\tilde{D})$, then  
\[
((A + B)^{1/2}\xi,\tilde{C}(A + B)^{1/2}\eta)
= (\xi,\Gamma_{A,B}(\tilde{C})\eta) 
= (\xi,\Gamma_{A,B}(\tilde{D})\eta) 
= ((A + B)^{1/2}\xi, \tilde{C}(A + B)^{1/2}\eta) 
\]
for any $\xi, \eta \in \mathcal{H}$, and hence $\tilde{C} = \tilde{D}$.

(Surjectivity) Let $C \in \mathfrak{I}_{A,B}$ be arbitrarily chosen. Due to the Douglas decomposition theorem (see e.g., \cite[Theorem 2.1]{FW71}) there exists a unique $D \in B(\mathcal{H})$ such that $(A + B)^{1/2}D = C^{1/2}$ and $\Vert D\Vert \leq 1$ hold. Multiplying the support projection of $(A+B)^{1/2}$ to $D$ from the left, we may and do regard $D$ as a bounded operator from $\mathcal{H}$ to $\mathcal{H}_{A,B}$, and hence we have $D^* T_{A,B} = C^{1/2}$ by taking the adjoint. Letting $\tilde{C} = DD^{*} \in \mathfrak{J}_{A,B}$ we have $C = \Gamma_{A,B}(\tilde{C})$. 

Finally, $\Gamma_{A,B}$ is order-preserving by its construction, and hence so is the inverse map $\Gamma_{A,B}^{-1}$ trivially. 
\end{proof}

\begin{remarks}\label{R2.3} {\rm 
(1) The above map $\Gamma_{A,B}$ naturally extends to an order-preserving bijection from $\{C \in B(\mathcal{H})_{+}\:;\: O \leq C \leq \alpha(A + B)\ \text{for some $\alpha>0$}\}$ to $\{\tilde{C} \in B(\mathcal{H}_{A,B})_{+}\:;\: O \leq \tilde{C} \leq \beta I_{\mathcal{H}_{A,B}}\ \text{for some $\beta>0$}\}$. We still denote it by the same symbol.

(2) It is clear that $\Gamma_{A,B}$ is strongly continuous. Moreover, if $C_\lambda \nearrow C$ in $\mathfrak{I}_{A,B}$, then
\begin{align*}
(\Gamma_{A,B}^{-1}(C_\lambda)(A + B)^{1/2}\xi,(A + B)^{1/2}\xi)
= (C_\lambda\xi,\xi)
\nearrow (C\xi, \xi)
= ( \Gamma_{A,B}^{-1}(C)(A+B)^{1/2}\xi, (A + B)^{1/2}\xi) 
\end{align*}
for any $\xi \in \mathcal{H}$. Hence we have $\Gamma_{A,B}^{-1}(C_\lambda) \nearrow \Gamma_{A,B}^{-1}(C)$ because $\mathcal{H}_{A,B} = \overline{\mathrm{ran}}((A + B)^{1/2})$.}
\end{remarks}

With these preparations, we have
\[
\phi(A,B) 
= 
\phi(T_{A,B}^* R_{A,B} T_{A,B}, T_{A,B}^* S_{A,B} T_{A,B}) 
=
T_{A,B}^*\phi(R_{A,B}, S_{A,B})T_{A,B} 
=
\Gamma_{A,B}(\phi(R_{A,B},S_{A,B})) 
\]
by the operator homogeneity, and $\phi(R_{A,B}, S_{A,B})$ is the usual functional calculus by Lemma \ref{L2.1}. This is an explicit construction of $\phi(A,B)$ and actually, a translation of Pusz--Woronowicz's original construction of their functional calculus for positive sesquilinear forms \cite{PW75} in terms of Hilbert space operators.

\medskip
Here is a simple property on PW-functional calculus. We believe that this property has not been pointed out so far. 

\begin{theorem}\label{T2.4} 
If $\phi(x_1 x_2, y_1 y_2) = \phi(x_1,y_1)\phi(x_2,y_2)$ holds for every $(x_1,y_1),(x_2,y_2) \in [0,\infty)^2$, then 
\[
\phi(A_1\otimes A_2, B_1\otimes B_2) = \phi(A_1,B_1)\otimes\phi(A_2,B_2)
\]
holds for any $(A_i,B_i) \in B(\mathcal{H}_i)_+ \times B(\mathcal{H}_i)_+$, $i=1,2$. 
\end{theorem}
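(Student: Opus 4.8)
The plan is to reduce the non-commuting tensor identity to a single commuting functional-calculus computation by means of operator homogeneity, following exactly the explicit construction recalled just before the statement. Throughout I would abbreviate $T_i := T_{A_i,B_i}$, $R_i := R_{A_i,B_i}$, $S_i := S_{A_i,B_i}$, $\Gamma_i := \Gamma_{A_i,B_i}$ for $i=1,2$, and set $\mathcal{M} := \mathcal{H}_{A_1,B_1}\otimes\mathcal{H}_{A_2,B_2}$. The operator that transports the problem onto $\mathcal{M}$ is $C := T_1\otimes T_2 : \mathcal{H}_1\otimes\mathcal{H}_2 \to \mathcal{M}$.

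First I would observe that $C$ has dense range, since each $T_i$ has dense range by construction; hence $\overline{\mathrm{ran}}(C)=\mathcal{M}$ and the hypothesis of operator homogeneity holds automatically for the pair $(R_1\otimes R_2,\,S_1\otimes S_2)$ on $\mathcal{M}$ together with this $C$. Using $\Gamma_i(R_i)=A_i$ and $\Gamma_i(S_i)=B_i$ from Lemma \ref{L2.2}, a direct computation on simple tensors gives $C^*(R_1\otimes R_2)C=(T_1^*R_1T_1)\otimes(T_2^*R_2T_2)=A_1\otimes A_2$ and likewise $C^*(S_1\otimes S_2)C=B_1\otimes B_2$. Operator homogeneity then yields
\[
\phi(A_1\otimes A_2,\,B_1\otimes B_2)=C^*\,\phi(R_1\otimes R_2,\,S_1\otimes S_2)\,C .
\]

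The heart of the matter is to evaluate the commuting functional calculus $\phi(R_1\otimes R_2,\,S_1\otimes S_2)$. By Lemma \ref{L2.1} we have $S_i=I-R_i$, so both $R_1\otimes R_2$ and $S_1\otimes S_2$ lie in the commutative von Neumann algebra generated by the commuting pair $R_1\otimes I$ and $I\otimes R_2$; in particular they commute and $\phi(R_1\otimes R_2,\,S_1\otimes S_2)$ is the ordinary functional calculus. Writing $R_1\otimes R_2=(R_1\otimes I)(I\otimes R_2)$ and $S_1\otimes S_2=(I-R_1\otimes I)(I-I\otimes R_2)$, both are Borel functions of the basic pair $(R_1\otimes I,\,I\otimes R_2)$, so by the composition rule for multivariate functional calculus (equivalently, integration against the product spectral measure $E_1\otimes E_2$) the operator equals $H(R_1\otimes I,\,I\otimes R_2)$, where $H(s,t):=\phi\bigl(st,(1-s)(1-t)\bigr)$ on $[0,1]^2$. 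Here the hypothesis enters decisively: multiplicativity gives $H(s,t)=\phi(s,1-s)\,\phi(t,1-t)=h_1(s)h_2(t)$ with $h_i(s):=\phi(s,1-s)$, so $H$ factors as a function of $s$ times a function of $t$. Consequently $H(R_1\otimes I,\,I\otimes R_2)=h_1(R_1)\otimes h_2(R_2)=\phi(R_1,S_1)\otimes\phi(R_2,S_2)$, using $S_i=I-R_i$ once more to identify $h_i(R_i)=\phi(R_i,S_i)$.

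Finally I would substitute this back and unwind $C=T_1\otimes T_2$:
\[
C^*\bigl(\phi(R_1,S_1)\otimes\phi(R_2,S_2)\bigr)C=\Gamma_1\bigl(\phi(R_1,S_1)\bigr)\otimes\Gamma_2\bigl(\phi(R_2,S_2)\bigr),
\]
and the explicit construction $\phi(A_i,B_i)=\Gamma_i(\phi(R_i,S_i))$ recalled above completes the proof. The one step requiring genuine care, and the place where multiplicativity is indispensable, is the factorization of the scalar function $H(s,t)=\phi(st,(1-s)(1-t))$ into $h_1(s)h_2(t)$, together with the justification that the commuting functional calculus of the composite may be computed through the basic pair $(R_1\otimes I,\,I\otimes R_2)$; everything else is bookkeeping with $\Gamma_i$ and the dense-range property of $C$.
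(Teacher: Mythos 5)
Your proof is correct and takes essentially the same route as the paper's: transport the problem via $C=T_{A_1,B_1}\otimes T_{A_2,B_2}$ (dense range) and operator homogeneity, then use the multiplicativity hypothesis to factor $\phi(x_1x_2,(1-x_1)(1-x_2))=\phi(x_1,1-x_1)\,\phi(x_2,1-x_2)$ under the product spectral measure $E_1\otimes E_2$, which is exactly the step the paper justifies by citing \cite[Theorem 8.2]{St59}. Your phrasing of this step through the basic commuting pair $(R_1\otimes I,\,I\otimes R_2)$ and the factorized function $H(s,t)$ is only a cosmetic repackaging of the paper's double spectral integral.
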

\begin{proof}
We set $T \coloneqq T_{A_1,B_1}\otimes T_{A_2,B_2}$ and $R \coloneqq R_{A_1,B_1}\otimes R_{A_2,B_2}$, $S \coloneqq S_{A_1,B_1}\otimes S_{A_2,B_2}$. Let
\[
R_{A_i,B_i} = \int_0^1 x\,E_i(dx), \quad (i=1,2)
\]
be the spectral decomposition. 

Since $T$ has a dense range in $\mathcal{H}_{A_1,B_1}\otimes\mathcal{H}_{A_2,B_2}$ and $A_1\otimes A_2 = T^* R T$, $B_1\otimes B_2 = T^* S T$, the operator homogeneity says that   
\[
\phi(A_1\otimes A_2, B_1\otimes B_2) 
= 
T^*\phi(R,S)T.
\]
By assumption, we have
\begin{align*}
\phi(R,S) 
&= 
\int_0^1 \int_0^1 \phi(x_1 x_2, (1-x_1)(1-x_2))\, E_1(dx_1)\otimes E_2(dx_2) \\
&=
\int_0^1 \int_0^1 \phi(x_1,1-x_1) \phi(x_2,1-x_2)\, E_1(dx_1)\otimes E_2(dx_2) \\
&=
\phi(R_{A_1,B_1},S_{A_1,B_1})\otimes\phi(R_{A_2,B_2},S_{A_2,B_2})  
\end{align*}
(see e.g., \cite[Theorem 8.2]{St59}), and hence we obtain the desired formula. 
\end{proof}

A typical example of $\phi(x,y)$ we can apply the above proposition to is $(x,y) \mapsto x^\alpha y^{1-\alpha} = (x/y)^\alpha\,y$. In particular, this result is applicable to the weighted mean $A\,\sharp_\alpha B$. 

\begin{remark}\label{R2.5} {\rm The above theorem is still valid in some sense even when $\phi(x,y)$ is just bounded from below on any compact subset, though its proof needs careful treatment of `tensor products' of extended lower semibounded parts. Assume $\psi(x,y) = x\log(x/y)$. Then, moreover, the way of the above proof naturally suggests that $\psi(A_1\otimes A_2, B_1\otimes B_2) = \psi(A_1,B_1)\otimes A_2 + A_1\otimes \psi(A_2,B_2)$ holds. This will be discussed in appendix A.} 
\end{remark}  

\medskip
In the rest of this section we devote to proving technical lemmas, which are necessary in the next section. 

\begin{lemma}\label{L2.6}
If $\phi(x,y)$ admits a representation $\phi(x,y) = yf(y)$ on $\{(x,y) \in [0,\infty)^2\,;\,x+y=1\}$ with some bounded Borel function $f(y)$ on $[0,1]$. Then we have
\[
\phi(A,B) = B^{1/2} V_{A,B} f(S_{A,B}) V_{A,B}^* B^{1/2}. 
\]
Moreover, if $f(y)$ is an indicator function, then $V_{A,B} f(S_{A,B}) V_{A,B}^*$ must be a projection.
\end{lemma}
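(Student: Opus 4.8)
The plan is to read off both assertions directly from the master identity
\[
\phi(A,B) = \Gamma_{A,B}(\phi(R_{A,B},S_{A,B})) = T_{A,B}^*\,\phi(R_{A,B},S_{A,B})\,T_{A,B}
\]
established above via the operator homogeneity. First I would invoke Lemma \ref{L2.1}: since $R_{A,B}+S_{A,B}=1_{\mathcal{H}_{A,B}}$, the commuting pair $(R_{A,B},S_{A,B})$ has joint spectrum contained in the segment $\{(x,y)\,;\,x+y=1\}$, on which $\phi(x,y)=yf(y)$ by hypothesis. Hence the two-variable functional calculus collapses to the one-variable one, $\phi(R_{A,B},S_{A,B}) = S_{A,B}f(S_{A,B})$, and therefore $\phi(A,B) = T_{A,B}^*\,S_{A,B}f(S_{A,B})\,T_{A,B}$.

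The remaining task for the first assertion is purely a translation through the polar decomposition. Recall $Y_{A,B}T_{A,B}=B^{1/2}$ and $Y_{A,B}=V_{A,B}|Y_{A,B}|=V_{A,B}S_{A,B}^{1/2}$. Taking adjoints and using that $V_{A,B}^*V_{A,B}$ is the support projection of $S_{A,B}$, so that it fixes $S_{A,B}^{1/2}$, I would compute
\[
V_{A,B}^* B^{1/2} = V_{A,B}^* V_{A,B}\,S_{A,B}^{1/2}\,T_{A,B} = S_{A,B}^{1/2}\,T_{A,B},
\]
and symmetrically $B^{1/2}V_{A,B} = T_{A,B}^* S_{A,B}^{1/2}$. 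Substituting these into the claimed right-hand side gives $B^{1/2}V_{A,B}f(S_{A,B})V_{A,B}^*B^{1/2} = T_{A,B}^* S_{A,B}^{1/2} f(S_{A,B}) S_{A,B}^{1/2} T_{A,B} = T_{A,B}^* S_{A,B}f(S_{A,B})T_{A,B}$, which is exactly $\phi(A,B)$.

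For the ``moreover'' part, write $Q\coloneqq f(S_{A,B})$, which is a spectral projection of $S_{A,B}$ when $f$ is an indicator. Then $V_{A,B}QV_{A,B}^*$ is plainly self-adjoint, and to see that it is idempotent I would expand $(V_{A,B}QV_{A,B}^*)^2 = V_{A,B}\,Q\,(V_{A,B}^*V_{A,B})\,Q\,V_{A,B}^*$ and use that $P\coloneqq V_{A,B}^*V_{A,B}$, being itself a spectral projection of $S_{A,B}$, commutes with $Q$, together with the partial-isometry identity $V_{A,B}P=V_{A,B}$; this collapses the product back to $V_{A,B}QV_{A,B}^*$.

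I expect the only genuinely delicate point to be this last idempotency check, precisely because $f$ is allowed to be nonzero at $0$: then $Q$ may contain the kernel projection $\mathbf{1}_{\{0\}}(S_{A,B})$, on which $V_{A,B}$ vanishes, so one must confirm that $QPQ=PQ$ and that $V_{A,B}$ absorbs the discrepancy $Q-PQ$, rather than arguing that $Q$ itself reduces under $V_{A,B}$. The first assertion, by contrast, is entirely formal once the master identity and Lemma \ref{L2.1} are in hand.
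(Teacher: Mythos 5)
Your proposal is correct and follows essentially the same route as the paper's proof: collapse $\phi(R_{A,B},S_{A,B})$ to $S_{A,B}f(S_{A,B})$ via Lemma \ref{L2.1}, pass through the polar decomposition using that $V_{A,B}^*V_{A,B}$ is the support projection of $S_{A,B}^{1/2}$ (the paper writes this as $S_{A,B}^{1/2}=V_{A,B}^*Y_{A,B}$), and verify idempotency by commuting $V_{A,B}^*V_{A,B}$ past $f(S_{A,B})$. The ``delicate point'' you flag about $f(0)\neq 0$ is already absorbed by exactly the commutation-plus-$V_{A,B}V_{A,B}^*V_{A,B}=V_{A,B}$ computation you give, which is the paper's argument verbatim.
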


\begin{proof}
Since $R_{A,B} + S_{A,B} = I_{\mathcal{H}_{A,B}}$, we have $\phi(R_{A,B},S_{A,B}) = S_{A,B}f(S_{A,B}) = S_{A,B}^{1/2} f(S_{A,B}) S_{A,B}^{1/2}$. Since $S_{A,B}^{1/2} = |Y_{A,B}| = V_{A,B}^* Y_{A,B} = Y_{A,B}^* V_{A,B}$ and $Y_{A,B}\,T_{A,B} = B^{1/2}$, we obtain 
\begin{align*}
\phi(A,B)
&= 
T_{A,B}^* S_{A,B}^{1/2}f(S_{A,B})S_{A,B}^{1/2}T_{A,B} \\
&= 
T_{A,B}^* Y_{A,B}^* V_{A,B} f(S_{A,B}) V_{A,B}^* Y_{A,B}T_{A,B} \\
&= 
B^{1/2} V_{A,B} f(S_{A,B}) V_{A,B}^* B^{1/2}.
\end{align*}
    
If $f(y)$ is an indicator function, the $f(S_{A,B})$ is a projection. Since $V_{A,B}^* V_{A,B}$ commutes with $f(S_{A,B})$, it follows that 
\[
(V_{A,B} f(S_{A,B}) V_{A,B}^*)^2 
= 
V_{A,B} f(S_{A,B}) V_{A,B}^* V_{A,B} f(S_{A,B}) V_{A,B}^*
= 
V_{A,B} f(S_{A,B}) V_{A,B}^*.
\]
Hence $V_{A,B} f(S_{A,B})V_{A,B}^*$ is a projection.
\end{proof}

\begin{lemma}\label{L2.7}
Let $f(y)$ be a bounded Borel function on $[0,1]$. Then we have
\begin{equation}\label{Eq2.1}
f(Y_{A,B}Y_{A,B}^*) = f(0)(I - V_{A,B} V_{A,B}^*) + V_{A,B} f(S_{A,B})V_{A,B}^*,         
\end{equation}
and moreover, 
\[
B^{1/2} V_{A,B} f(S_{A,B})V_{A,B}^* B^{1/2}
= 
B^{1/2}f(Y_{A,B}Y_{A,B}^*) B^{1/2}.
\]
\end{lemma}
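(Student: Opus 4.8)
The plan is to establish the spectral identity \eqref{Eq2.1} first, since the second formula then follows by compressing both sides with $B^{1/2}$ and checking that the resulting correction term vanishes. Throughout, write $Y = Y_{A,B}$, $V = V_{A,B}$ and $S = S_{A,B}$ for brevity. For \eqref{Eq2.1} I would start from the polar decomposition $Y = V|Y| = VS^{1/2}$ and record two standard facts: $V^*V$ is the orthogonal projection onto $\overline{\mathrm{ran}}(S^{1/2}) = \overline{\mathrm{ran}}(S)$, while $VV^*$ is the orthogonal projection onto $\overline{\mathrm{ran}}(Y) = \overline{\mathrm{ran}}(YY^*)$. Since $YY^* = VSV^*$, a direct computation gives the intertwining relation $YY^*\,V = V\,S$, hence $(YY^*)^n V = VS^n$ for all $n \geq 1$, and therefore $g(YY^*)\,V = V\,g(S)$ for every bounded Borel function $g$ with $g(0) = 0$. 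The cleanest justification of this last step is to observe that $V$ restricts to a unitary from $\overline{\mathrm{ran}}(S)$ onto $\overline{\mathrm{ran}}(Y)$ intertwining the restrictions of $S$ and $YY^*$ to these subspaces, so that the Borel functional calculus is transported verbatim across it.

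Next, split $f = f(0) + g$ with $g(0) = 0$. Because $g$ vanishes at $0$, the operator $g(YY^*)$ is supported on $\overline{\mathrm{ran}}(Y) = \mathrm{ran}(VV^*)$, so the intertwining relation compresses to
\[
g(YY^*) = VV^*\,g(YY^*)\,VV^* = V\,\big(V^* g(YY^*) V\big)\,V^* = V\,g(S)\,V^*.
\]
Substituting $g = f - f(0)$ and using $V f(0) V^* = f(0)\,VV^*$ then yields exactly $f(YY^*) = f(0)(I - VV^*) + V f(S) V^*$, which is \eqref{Eq2.1}.

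For the second identity I would compress \eqref{Eq2.1} by $B^{1/2}$ on both sides; the only term requiring attention is $f(0)\,B^{1/2}(I - VV^*)B^{1/2}$. Here $Y T_{A,B} = B^{1/2}$ gives $\mathrm{ran}(B^{1/2}) \subseteq \overline{\mathrm{ran}}(Y) = \mathrm{ran}(VV^*)$, so $(I - VV^*)B^{1/2} = 0$ and hence $B^{1/2}(I - VV^*)B^{1/2} = \big((I - VV^*)B^{1/2}\big)^*\big((I - VV^*)B^{1/2}\big) = 0$. This leaves precisely the claimed equality $B^{1/2} f(YY^*) B^{1/2} = B^{1/2} V f(S) V^* B^{1/2}$.

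I expect the only genuine obstacle to be the rigorous passage from the algebraic intertwining $YY^* V = VS$ to its functional-calculus version $g(YY^*)V = V g(S)$, i.e.\ the careful bookkeeping of $\ker(Y)$ and of the value $f(0)$ for a partial (non-unitary) isometry; once the unitary between $\overline{\mathrm{ran}}(S)$ and $\overline{\mathrm{ran}}(Y)$ is isolated, the remaining manipulations are purely formal.
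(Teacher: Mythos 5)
Your argument is correct, but for the key identity \eqref{Eq2.1} you take a genuinely different route from the paper. The paper argues by approximation: it first verifies \eqref{Eq2.1} for monomials via $V_{A,B}S_{A,B}^n V_{A,B}^* = (V_{A,B}S_{A,B}V_{A,B}^*)^n = (Y_{A,B}Y_{A,B}^*)^n$, then passes to continuous $f$ by uniform polynomial approximation, and finally to all bounded Borel $f$ by the monotone class theorem. You instead split $f = f(0) + g$ with $g(0)=0$ and transport the Borel functional calculus through the unitary that $V_{A,B}$ induces from $\overline{\mathrm{ran}}(S_{A,B})$ onto $\overline{\mathrm{ran}}(Y_{A,B})$, which intertwines the reductions of $S_{A,B}$ and $Y_{A,B}Y_{A,B}^* = V_{A,B}S_{A,B}V_{A,B}^*$. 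The step you flagged as the only delicate one is indeed sound as you set it up: both subspaces reduce the respective operators, and since $g(0)=0$ the operators $g(S_{A,B})$ and $g(Y_{A,B}Y_{A,B}^*)$ are supported on the initial and final spaces of the partial isometry, so the compression $g(Y_{A,B}Y_{A,B}^*) = V_{A,B}\,g(S_{A,B})\,V_{A,B}^*$ is legitimate. What each approach buys: yours handles all bounded Borel functions in one stroke and makes the role of the value $f(0)$ structurally transparent, while the paper's stays at the level of polynomial identities plus a standard measure-theoretic extension and never needs to discuss reducing subspaces. For the second identity the two proofs share the same idea, namely that the correction term $f(0)\,B^{1/2}(I - V_{A,B}V_{A,B}^*)B^{1/2}$ vanishes, but your derivation $(I - V_{A,B}V_{A,B}^*)B^{1/2} = (I - V_{A,B}V_{A,B}^*)Y_{A,B}T_{A,B} = O$ is a touch more direct (and formally stronger) than the paper's, which instead identifies $\ker(Y_{A,B}^*)$ with $\ker(B^{1/2})$ using the density of $\mathrm{ran}(T_{A,B})$ and concludes only the sandwiched vanishing $B^{1/2}(I - V_{A,B}V_{A,B}^*)B^{1/2} = O$.
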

\begin{proof}
Since $V_{A,B} S_{A,B}^n V_{A,B}^* = (V_{A,B} S_{A,B} V_{A,B}^*)^n = (Y_{A,B}Y_{A,B}^*)^n$ for all positive integers $n$, we have formula \eqref{Eq2.1} when $f(y)$ is a polynomial. Taking a uniform approximation to a given continuous function on $[0,1]$ by polynomials, we see that the same formula holds true even for any continuous function on $[0,1]$. Then, appealing to the monotone class theorem we finally confirm that formula \eqref{Eq2.1} holds for a general Borel function.

Let $Q = I - V_{A,B}V_{A,B}^{*}$ be the orthogonal projection onto $\ker(Y_{A,B}^*) = \overline{\rm{ran}}(Y_{A,B})^\perp$. We observe that 
\begin{align*}
\xi \in \ker(Y_{A,B}^*) 
&\Leftrightarrow (Y_{A,B}^*\xi, T_{A,B}\eta) = 0 \quad (\text{for all $\eta \in \mathcal{H}$}) \\
&\Leftrightarrow (\xi, B^{1/2}\eta) = 0 \quad (\text{for all $\eta \in \mathcal{H}$}) \\
&\Leftrightarrow (B^{1/2}\xi, \eta) = 0 \quad (\text{for all $\eta \in \mathcal{H}$}) \\
&\Leftrightarrow \xi \in \ker(B^{1/2}).
\end{align*}
Thus we obtain $B^{1/2}QB^{1/2} = O$. Moreover, we have
\[
B^{1/2}f(Y_{A,B}Y_{A,B}^{*})B^{1/2}
= 
B^{1/2}(f(0)Q + V_{A,B} f(S_{A,B})V_{A,B}^*)B^{1/2}
= 
B^{1/2}(V_{A,B} f(S_{A,B})V_{A,B}^*)B^{1/2}.
\]
Hence we are done.
\end{proof}

\begin{remark}\label{R2.8} {\rm 
Similarly, for any bounded Borel function $f(x)$ on $[0,1]$ we have
\begin{gather*}
f(X_{A,B}X_{A,B}^{*}) =f(0)(I - U_{A,B} U_{A,B}^*) + U_{A,B} f(R_{A,B})U_{A,B}^*, \\
A^{1/2}U_{A,B} f(R_{A,B})U_{A,B}^* A^{1/2} = A^{1/2}f(X_{A,B}X_{A,B}^*)A^{1/2}.
\end{gather*}}
\end{remark}

\section{Lebesgue decomposition of positive operators}

Let us begin by recalling the definitions introduced by Ando \cite{Ando76}. 

\begin{definition}\label{D3.1}
Let $A, B$ be positive bounded operators on a Hilbert space. We say that $A$ and $B$ are \textit{mutually singular}, denoted by $A \perp B$, if there is no non-zero bounded operator $C$ on the Hilbert space satisfying that both $O \leq C \leq A$ and $O \leq C \leq B$. We also say that $B$ is \textit{$A$-absolutely continuous}, denoted by $B \lessdot  A$, if there exists a sequence $B_n$ of positive bounded operators on the Hilbert space such that $B_{n} \nearrow B$ as $n \to \infty$ and $B_n \leq \alpha_n A$ for some $\alpha_n > 0$.
\end{definition}

In what follows, let $A, B$ be positive bounded operators on a Hilbert space $\mathcal{H}$. We will freely use the notation given in section 2.

\begin{lemma}\label{L3.2}
The following conditions are equivalent: 
\begin{itemize}
\item[(1)] $A \perp B$.
\item[(2)] $R_{A,B} \perp S_{A,B}$.
\item[(3)] $R_{A,B}$ and $S_{A,B}$ form an orthogonal pair of  projections.
\end{itemize}
\end{lemma}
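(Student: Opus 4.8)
The plan is to establish (1) $\Leftrightarrow$ (2) by transporting the defining condition of mutual singularity across the order-isomorphism $\Gamma_{A,B}$, and then to prove (2) $\Leftrightarrow$ (3) using only that $R_{A,B}$ and $S_{A,B}$ are commuting positive contractions with $R_{A,B} + S_{A,B} = I_{\mathcal{H}_{A,B}}$ (Lemma \ref{L2.1}).

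For (1) $\Leftrightarrow$ (2), I would observe that a bounded operator $C$ witnessing the failure of $A \perp B$, that is $O \leq C \leq A$ and $O \leq C \leq B$ with $C \neq O$, automatically satisfies $C \leq A \leq A + B$, so $C \in \mathfrak{I}_{A,B}$ and $\tilde{C} \coloneqq \Gamma_{A,B}^{-1}(C)$ is defined. Since $\Gamma_{A,B}^{-1}$ is order-preserving (Lemma \ref{L2.2}) and sends $A \mapsto R_{A,B}$, $B \mapsto S_{A,B}$, the inequalities $C \leq A$, $C \leq B$ translate to $\tilde{C} \leq R_{A,B}$, $\tilde{C} \leq S_{A,B}$; and $\tilde{C} \neq O$ because $\Gamma_{A,B}$ is a bijection with $\Gamma_{A,B}(O) = O$. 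Running the same argument through $\Gamma_{A,B}$ in the other direction gives the converse, so the existence of a singularity-breaking operator for $(A,B)$ is equivalent to that for $(R_{A,B}, S_{A,B})$; that is, (1) $\Leftrightarrow$ (2).

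For (2) $\Leftrightarrow$ (3), I would exploit $S_{A,B} = I_{\mathcal{H}_{A,B}} - R_{A,B}$. Assuming (2), I would test mutual singularity against the explicit operator $C \coloneqq R_{A,B} S_{A,B} = R_{A,B}(I_{\mathcal{H}_{A,B}} - R_{A,B})$: the identities $R_{A,B} - C = R_{A,B}^2 \geq O$ and $S_{A,B} - C = (I_{\mathcal{H}_{A,B}} - R_{A,B})^2 \geq O$ show $O \leq C \leq R_{A,B}$ and $O \leq C \leq S_{A,B}$, so $R_{A,B} \perp S_{A,B}$ forces $C = O$, i.e.\ $R_{A,B}^2 = R_{A,B}$; then $R_{A,B}$ is a projection and $S_{A,B} = I_{\mathcal{H}_{A,B}} - R_{A,B}$ is the complementary one, giving (3). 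Conversely, if $R_{A,B}, S_{A,B}$ are complementary orthogonal projections and $O \leq \tilde{C} \leq R_{A,B}$, $O \leq \tilde{C} \leq S_{A,B}$, then the standard fact that $O \leq \tilde{C} \leq P$ for a projection $P$ implies $\tilde{C}(I_{\mathcal{H}_{A,B}} - P) = O$ yields $\tilde{C} S_{A,B} = O$ and $\tilde{C} R_{A,B} = O$, whence $\tilde{C} = \tilde{C}(R_{A,B} + S_{A,B}) = O$; this is (2).

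The genuinely substantive step is the translation in (1) $\Leftrightarrow$ (2), which is where the order-isomorphism $\Gamma_{A,B}$ does all the work; once that is in place, (2) $\Leftrightarrow$ (3) is an elementary consequence of commutativity and $R_{A,B} + S_{A,B} = I_{\mathcal{H}_{A,B}}$. The only point demanding a little care is confirming that $C$ indeed lands in $\mathfrak{I}_{A,B}$ (and $\tilde{C}$ in $\mathfrak{J}_{A,B}$) so that $\Gamma_{A,B}$ and its inverse are applicable, but this follows immediately from $C \leq A \leq A + B$ and $\tilde{C} \leq R_{A,B} \leq I_{\mathcal{H}_{A,B}}$.
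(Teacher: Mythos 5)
Your proof is correct. The overall skeleton agrees with the paper's: both use the order-isomorphism $\Gamma_{A,B}$ of Lemma \ref{L2.2} to transport singularity witnesses between the pairs $(A,B)$ and $(R_{A,B},S_{A,B})$ (the paper organizes this as a cycle (1) $\Rightarrow$ (2) $\Rightarrow$ (3) $\Rightarrow$ (1) rather than your two equivalences), and your closing argument for (3) $\Rightarrow$ (2) --- annihilating $\tilde{C}$ against each complementary projection and summing --- is essentially the paper's (3) $\Rightarrow$ (1) computation $RDR = SDS = O$, hence $D^{1/2}R = D^{1/2}S = O$, hence $D=O$, with the $\Gamma$-transport factored out since you already handled it in (1) $\Leftrightarrow$ (2). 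Where you genuinely depart is (2) $\Rightarrow$ (3): writing $R = R_{A,B}$, $S = S_{A,B}$, the paper tests singularity against the family of scaled spectral projections $\varepsilon E_{R}([\varepsilon, 1-\varepsilon])$, lets $\varepsilon = 1/n \to 0$ to get $E_{R}((0,1)) = O$, and reads off $R = E_{R}(\{1\})$, $S = E_{R}(\{0\})$; you instead test against the single operator $C = RS$, where the identities $R - C = R^{2} \geq O$ and $S - C = (I-R)^{2} \geq O$ give the upper bounds at once, so singularity yields $R^{2} = R$ directly. One micro-step you leave implicit is the lower bound $C \geq O$, which does not follow from those two identities but from commutativity (Lemma \ref{L2.1}): $C = R^{1/2} S R^{1/2} \geq O$; since you invoked commutativity at the outset, this is a presentational gap only, not a mathematical one. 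Your variant is slightly more elementary, dispensing with the spectral measure and the limiting argument entirely, whereas the paper's argument buys the explicit spectral identification of $R$ and $S$ as $E_{R}(\{1\})$ and $E_{R}(\{0\})$ as a by-product; both routes are complete.
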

\begin{proof}
In the proof, we will write $\Gamma = \Gamma_{A,B}$, $R = R_{A,B}$ and $S=S_{A,B}$ for simplicity. 

(1) $\Rightarrow$ (2):  
Assume that $O \leq C \leq R$ and $O \leq C \leq S$. Since $\Gamma(R) = A$ and $\Gamma(S) = B$ we have $O \leq \Gamma(C) \leq A$ and $O \leq \Gamma(C) \leq B$. By assumption $\Gamma(C)$ must be $O$, and hence $C = O$ too by Lemma \ref{L2.2}.

\medskip
(2) $\Rightarrow$ (3): Let $E_R$ be the spectral (projection-valued) measure of $R$. For any $0<\varepsilon<1$ we have
\[
O \leq \varepsilon E_R([\varepsilon, 1 - \varepsilon]) \leq R,\qquad 
O \leq \varepsilon E_R([\varepsilon, 1 - \varepsilon]) \leq I_{\mathcal{H}_{A,B}} - R = S. 
\]
By assumption we obtain $E_R([\varepsilon, 1 - \varepsilon]) = O$. Taking the limit as $n\to\infty$ with $\varepsilon = 1/n$ we have $E_R((0,1)) = O$ and $I_{\mathcal{H}_{A,B}} = E_R([0,1]) =E_R(\{0\}) + E_R(\{1\})$. Hence $R = (E_R(\{0\}) + E_R(\{1\}))R = E_R(\{1\})$, and similarly $S = E_R(\{0\})$.

\medskip
(3) $\Rightarrow$ (1): Assume that $O \leq C \leq A$, $O \leq C \leq B$. Since $C \leq A + B$, Lemma \ref{L2.2} ensures that there is a unique $O \leq D \leq I_{\mathcal{H}_{A,B}}$ such that $C = \Gamma(D)$. 
Then $D \leq \Gamma^{-1}(A) = R$ and $D \leq \Gamma^{-1}(B) = S$. Thus $RDR = SDS = O$ by assumption. Then $D^{1/2}R = D^{1/2}S = O$. Hence $D = O$ because $R + S = I_{\mathcal{H}_{A,B}}$.
\end{proof}

\begin{definition}\label{D3.3}
We say that $B = B_{1} + B_{2}$ with $B_{1}, B_{2} \leq B$ is an \emph{$A$-Lebesgue decomposition} of $B$ if $B_{1} \lessdot A$ and $B_{2} \perp A$.
\end{definition}

\begin{remark} \label{R3.4} {\rm 
Let us recall von Neumann's clever proof (see \cite[Theorem 6.10]{Rudin}) of Lebesgue decomposition for measures, which gave us a motivation to this work.  

Let $\mu,\nu$ be finite positive measures on a measurable space $\Omega$. We set $\mathcal{H} = L^{2}(\lambda)$ with $\lambda = \mu + \nu$. By Riesz's representation theorem we have $\mu(\Lambda) = (\mathbf{1}_\Lambda,R \mathbf{1}_\Omega) = (\mathbf{1}_\Lambda, g_{\mu})$, $\nu(\Lambda) = (\mathbf{1}_\Lambda, S \mathbf{1}_\Omega) = (\mathbf{1}_\Lambda, g_{\nu})$ for any measurable subset $\Lambda \subset \Omega$, where $g_{\mu}, g_{\nu}$ are the unique elements of $\mathcal{H} = L^{2}(\lambda)$ corresponding to the bounded linear functionals $f \in L^{2}(\lambda) \mapsto \int_\Omega f\,d\mu \in \mathbb{C}$, $f \in L^{2}(\lambda)\mapsto \int_\Omega f\,d\nu \in \mathbb{C}$, respectively, and $R, S$ are the multiplication operators of those functions, respectively. It is trivial that $R, S \geq O$ and $R+S = I$. We define $S_\mathrm{c} = S E_R((0,1])$ with the spectral (projection-valued) measure $E_R$ of $R$, and set $S_\mathrm{s} = S - S_\mathrm{c}$. Then we obtain that  $S_\mathrm{s}\mathbf{1}_\Omega = g_{\nu}\mathbf{1}_{\{g_{\mu} \neq 0\}}$ and $S_\mathrm{c}\mathbf{1}_\Omega = g_{\nu}\mathbf{1}_{\{g_{\mu} = 0\}}$. The $\mu$-Lebesgue decomposition of $\nu$ is given by $\nu = \nu_\mathrm{c} + \nu_\mathrm{s}$, where
$\nu_\mathrm{c}(\Lambda) = (\mathbf{1}_\Lambda,S_\mathrm{c} \mathbf{1}_\Omega)$ and $\nu_\mathrm{s}(\Lambda) = (\mathbf{1}_\Lambda,S_{s} \mathbf{1}_\Omega)$ for any measurable subset $\Lambda \subset \Omega$.}
\end{remark}

The above remark suggests us to define the absolutely continuous and the singular parts $(S_{A,B})_\mathrm{c}$ and $(S_{A,B})_\mathrm{s}$ of $S_{A,B}$ with respect to $R_{A,B}$ as follows.

\begin{lemma}\label{L3.5} 
We define $(S_{A,B})_\mathrm{c} \coloneqq S_{A,B}E_{R_{A,B}}((0,1])$ and $(S_{A,B})_\mathrm{s} = S_{A,B} - (S_{A,B})_\mathrm{c} = S_{A,B}E_{R_{A,B}}(\{0\})$ with the spectral (projection-valued) measure $E_{R_{A,B}}$ of $R_{A,B}$. Then $(S_{A,B})_\mathrm{c} \lessdot R_{A,B}$, $(S_{A,B})_\mathrm{s} \perp R_{A,B}$, and $S_{A,B} = (S_{A,B})_\mathrm{c} + (S_{A,B})_\mathrm{s}$ is an $R_{A,B}$-Lebesgue decomposition of $S_{A,B}$.
\end{lemma}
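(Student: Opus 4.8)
The plan is to exploit Lemma \ref{L2.1}, which gives $S_{A,B} = I_{\mathcal{H}_{A,B}} - R_{A,B}$, so that both operators are functions of the single positive contraction $R_{A,B}$ and the whole statement reduces to ordinary spectral calculus for $R_{A,B}$. Throughout I would write $R = R_{A,B}$, $S = S_{A,B}$, $E = E_{R_{A,B}}$ and $I = I_{\mathcal{H}_{A,B}}$ for brevity. The first observation I would record is that $R\,E(\{0\}) = O$, whence
$(S_{A,B})_\mathrm{s} = S\,E(\{0\}) = (I - R)E(\{0\}) = E(\{0\})$
is exactly the orthogonal projection onto $\ker(R)$, while $(S_{A,B})_\mathrm{c} = S\,E((0,1])$. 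Since $S$ commutes with every spectral projection of $R$, both summands have the form $S^{1/2}E(\Lambda)S^{1/2}$ and therefore satisfy $O \le (S_{A,B})_\mathrm{c}, (S_{A,B})_\mathrm{s} \le S$, which is the size requirement in Definition \ref{D3.3}.

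For the absolute continuity $(S_{A,B})_\mathrm{c} \lessdot R$, I would produce the approximating sequence explicitly by truncating the spectrum away from $0$: set $B_n \coloneqq S\,E([1/n,1])$. Because $[1/n,1] \nearrow (0,1]$, the spectral projections increase to $E((0,1])$, and as $S \ge O$ commutes with them we get $B_n \nearrow S\,E((0,1]) = (S_{A,B})_\mathrm{c}$. The domination $B_n \le \alpha_n R$ is where the spectral gap is used: on the range of $E([1/n,1])$ one has $R \ge (1/n)E([1/n,1])$, so $n R \ge E([1/n,1]) \ge S\,E([1/n,1]) = B_n$, giving $\alpha_n = n$. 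This verifies $(S_{A,B})_\mathrm{c} \lessdot R$ directly from Definition \ref{D3.1}.

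For the singularity $(S_{A,B})_\mathrm{s} \perp R$, I would argue straight from the definition of mutual singularity, using that $P \coloneqq (S_{A,B})_\mathrm{s} = E(\{0\})$ is the kernel projection of $R$. Suppose $O \le C \le (S_{A,B})_\mathrm{s}$ and $O \le C \le R$. From $C \le P$ one deduces $(I-P)C(I-P) = O$, hence $C^{1/2}(I-P) = O$ and so $C = PCP$. Conjugating $C \le R$ by $P$ and using $RP = R\,E(\{0\}) = O$ gives $C = PCP \le PRP = O$, so $C = O$. The same conclusion can be read off the orthogonality of ranges $\overline{\mathrm{ran}}(E(\{0\})) = \ker(R) \perp \overline{\mathrm{ran}}(R)$, but the direct computation is cleaner here. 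Combining the three facts yields the asserted $R_{A,B}$-Lebesgue decomposition of $S_{A,B}$.

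I do not foresee a serious obstacle: the entire point is that $R$ and $S$ commute, so the construction is just the scalar Lebesgue decomposition $y = y\,\mathbf{1}_{(0,1]}(x) + y\,\mathbf{1}_{\{0\}}(x)$ read through the spectral theorem for $R$. The only place demanding a little care is the domination step $B_n \le n R$, that is, converting the strict positivity of $R$ on $[1/n,1]$ into a genuine operator inequality valid on all of $\mathcal{H}_{A,B}$; everything else is bookkeeping with commuting spectral projections.
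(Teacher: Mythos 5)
Your proof is correct and follows essentially the same route as the paper: the same approximating sequence $S_{A,B}E_{R_{A,B}}([1/n,1]) \nearrow (S_{A,B})_\mathrm{c}$ with the same domination $S_{A,B}E_{R_{A,B}}([1/n,1]) \leq E_{R_{A,B}}([1/n,1]) \leq nR_{A,B}$, and the same compression-by-spectral-projections argument for singularity. Your observation that $(S_{A,B})_\mathrm{s} = E_{R_{A,B}}(\{0\})$ is a pleasant simplification (the paper instead compresses $C$ by both $E_{R_{A,B}}(\{0\})$ and $E_{R_{A,B}}((0,1])$), but it does not change the substance of the argument.
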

\begin{proof}
In the proof, we will write $R = R_{A,B}$ and $S=S_{A,B}$ for simplicity. 

We set $S_n = S E_R([1/n, 1]) \leq S_\mathrm{c}$. Then $S_n \nearrow S_\mathrm{c}$ as $n \to \infty$, and moreover,  
\[
S_n = S E_R([1/n, 1])\leq E_R([1/n,1]) \leq n R E_R([1/n, 1]) \leq nR.
\]
Hence $S_\mathrm{c} \lessdot R$.
    
Assume that $O \leq C \leq R$ and $O \leq C \leq S_{s}$. Then, 
\begin{gather*}
O \leq E_R(\{0\})CE_R(\{0\}) \leq R E_R(\{0\}) = O,\\
O \leq E_R((0,1])CE_R((0,1]) \leq S E_R((0,1]) = O. 
\end{gather*}
Then $C^{1/2}E_R(\{0\}) = O$. Similarly, we have $C^{1/2}E_R((0,1]) = O$. Hence $C = O$.
\end{proof}

\begin{proposition}\label{P3.6}
We set $B_\mathrm{c} = \Gamma_{A,B}((S_{A,B})_\mathrm{c})$ and $B_\mathrm{s} = \Gamma_{A,B}((S_{A,B})_\mathrm{s})$. Then $B = B_{c} + B_{s}$ is an $A$-Lebesgue decomposition of $B$.
\end{proposition}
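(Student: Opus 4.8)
The plan is to transport the $R_{A,B}$-Lebesgue decomposition of $S_{A,B}$ obtained in Lemma \ref{L3.5} back to the pair $(A,B)$ through the order-isomorphism $\Gamma_{A,B}$, using only its three structural features: it is (the restriction of) the linear map $\tilde{C} \mapsto T_{A,B}^*\,\tilde{C}\,T_{A,B}$, it is order-preserving together with its inverse (Lemma \ref{L2.2}), and it respects increasing limits (Remark \ref{R2.3}(2)). Throughout I abbreviate $\Gamma = \Gamma_{A,B}$, $R = R_{A,B}$, $S = S_{A,B}$, $S_\mathrm{c} = (S_{A,B})_\mathrm{c}$, $S_\mathrm{s} = (S_{A,B})_\mathrm{s}$.

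First I would verify $B = B_\mathrm{c} + B_\mathrm{s}$ with $B_\mathrm{c}, B_\mathrm{s} \leq B$. Since $\Gamma$ is linear and $S = S_\mathrm{c} + S_\mathrm{s}$ with $\Gamma(S) = B$, linearity gives $B = \Gamma(S_\mathrm{c}) + \Gamma(S_\mathrm{s}) = B_\mathrm{c} + B_\mathrm{s}$. As $O \leq S_\mathrm{c}, S_\mathrm{s} \leq S$ (they are $S$ cut by the complementary spectral projections $E_R((0,1])$ and $E_R(\{0\})$ of $R$, which commute with $S$), order-preservation of $\Gamma$ yields $O \leq B_\mathrm{c}, B_\mathrm{s} \leq \Gamma(S) = B$.

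Next I would establish $B_\mathrm{c} \lessdot A$. By Lemma \ref{L3.5} there is a sequence $S_n \nearrow S_\mathrm{c}$ with $S_n \leq \alpha_n R$ for suitable $\alpha_n > 0$. Applying the explicit formula $\Gamma(\tilde{C}) = T_{A,B}^*\,\tilde{C}\,T_{A,B}$, which is manifestly monotone and homogeneous in $\tilde{C}$, I get $\Gamma(S_n) \leq \alpha_n\,\Gamma(R) = \alpha_n A$; and by the monotone-limit property of Remark \ref{R2.3}(2) I get $\Gamma(S_n) \nearrow \Gamma(S_\mathrm{c}) = B_\mathrm{c}$. Thus the sequence $\Gamma(S_n)$ witnesses $B_\mathrm{c} \lessdot A$ directly from Definition \ref{D3.1}.

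Finally, and this is the step requiring the most care, I would prove $B_\mathrm{s} \perp A$ by pulling back through $\Gamma^{-1}$. Take any $C$ with $O \leq C \leq A$ and $O \leq C \leq B_\mathrm{s}$. Since $C \leq B_\mathrm{s} \leq B \leq A+B$, Lemma \ref{L2.2} provides a unique $D \in \mathfrak{J}_{A,B}$ with $C = \Gamma(D)$; applying the order-preserving $\Gamma^{-1}$ to $C \leq A = \Gamma(R)$ and $C \leq B_\mathrm{s} = \Gamma(S_\mathrm{s})$ gives $D \leq R$ and $D \leq S_\mathrm{s}$. But $S_\mathrm{s} \perp R$ by Lemma \ref{L3.5}, so the defining property of mutual singularity forces $D = O$, whence $C = \Gamma(O) = O$. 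Hence $B_\mathrm{s} \perp A$, completing the decomposition. The only delicate point is ensuring that $C$ actually lies in the range of $\Gamma$, which is exactly where the inequality $C \leq A + B$ (coming from $B_\mathrm{s} \leq B$) and the surjectivity in Lemma \ref{L2.2} are used.
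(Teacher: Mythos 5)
Your proposal is correct and takes essentially the same route as the paper's own proof: absolute continuity of $B_\mathrm{c}$ is witnessed by transporting the sequence $S_n$ from Lemma \ref{L3.5} through $\Gamma_{A,B}$ (using Lemma \ref{L2.2} and Remarks \ref{R2.3}(2)), and singularity of $B_\mathrm{s}$ is obtained by pulling any common lower bound $C$ back through $\Gamma_{A,B}^{-1}$ to reduce to $S_\mathrm{s} \perp R_{A,B}$. Your explicit verifications that $B = B_\mathrm{c} + B_\mathrm{s}$ with $B_\mathrm{c}, B_\mathrm{s} \leq B$ and that $C \leq A+B$ puts $C$ in the range of $\Gamma_{A,B}$ are details the paper leaves implicit, and they are correctly handled.
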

\begin{proof}
Write $B_n = \Gamma_{A,B}(S_n)$ with $S_n$ as in the proof of Lemma \ref{L3.5}. By Lemma \ref{L2.2} and Remarks \ref{R2.3}(2) we have $B_n \leq B$ and $B_n \nearrow B$ as $n \to \infty$, and moreover, 
\[
B_n = \Gamma_{A,B}(S_n) \leq n \Gamma_{A,B}(R_{A,B}) = nA_{A,B}.
\]
Hence $B_\mathrm{c}$ is $A$-absolutely continuous.

Assume that $O \leq C \leq A$ and $O \leq C \leq B_\mathrm{s}$. Then, $O \leq \Gamma_{A,B}^{-1}(C) \leq \Gamma_{A,B}^{-1}(A) = R_{A,B}$ and $O \leq \Gamma_{A,B}^{-1}(C) \leq \Gamma_{A,B}^{-1}(B_\mathrm{s}) = S_\mathrm{s}$. Hence $\Gamma_{A,B}^{-1}(C) = O$, implying $C = O$ by Lemma \ref{L2.2}.
\end{proof}

\begin{proposition}\label{P3.7}
The operator $B_{c}$ is the maximal one among all the $A$-absolutely continuous positive bounded operators $C$ with $C \leq B$. 
\end{proposition}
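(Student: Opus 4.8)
The plan is to transport the whole problem from $B(\mathcal{H})$ to the commuting model on $\mathcal{H}_{A,B}$ via the order isomorphism $\Gamma_{A,B}$ of Lemma \ref{L2.2}, where everything is expressed through the spectral projections of $R_{A,B}$, and then to run a short spectral argument. Concretely, let $C$ be any positive bounded operator with $C \lessdot A$ and $C \leq B$; I must show $C \leq B_\mathrm{c}$. Since $C \leq B \leq A+B$, Lemma \ref{L2.2} produces a unique $\tilde{C}$ with $O \leq \tilde{C} \leq S_{A,B}$ and $C = \Gamma_{A,B}(\tilde{C})$; by order preservation of $\Gamma_{A,B}^{-1}$, the goal $C \leq B_\mathrm{c} = \Gamma_{A,B}((S_{A,B})_\mathrm{c})$ is equivalent to $\tilde{C} \leq (S_{A,B})_\mathrm{c}$.

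The first real step is to translate absolute continuity across $\Gamma_{A,B}$: I claim that $C \lessdot A$ forces $\tilde{C} \lessdot R_{A,B}$. Starting from a sequence $C_n \nearrow C$ with $C_n \leq \alpha_n A$, I apply $\Gamma_{A,B}^{-1}$ (within the extended domain of Remarks \ref{R2.3}(1)), which is order preserving and, by Remarks \ref{R2.3}(2), turns $C_n \nearrow C$ into $\tilde{C}_n \coloneqq \Gamma_{A,B}^{-1}(C_n) \nearrow \tilde{C}$; linearity of $\Gamma_{A,B}$ together with $\Gamma_{A,B}(R_{A,B}) = A$ gives $\tilde{C}_n \leq \alpha_n R_{A,B}$. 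Hence $\tilde{C} \lessdot R_{A,B}$.

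The core computation is then spectral. Write $Q \coloneqq E_{R_{A,B}}(\{0\})$ and $P \coloneqq E_{R_{A,B}}((0,1]) = I_{\mathcal{H}_{A,B}} - Q$. From $\tilde{C}_n \leq \alpha_n R_{A,B}$ and $R_{A,B} Q = O$ I get $Q \tilde{C}_n Q \leq \alpha_n Q R_{A,B} Q = O$, so $\tilde{C}_n^{1/2} Q = O$ and thus $\tilde{C}_n Q = O$; letting $n \to \infty$ strongly yields $\tilde{C} Q = O$, equivalently $\tilde{C} = P \tilde{C} P$. Since $S_{A,B} = I_{\mathcal{H}_{A,B}} - R_{A,B}$ commutes with $P$, conjugating $\tilde{C} \leq S_{A,B}$ by $P$ gives $\tilde{C} = P \tilde{C} P \leq P S_{A,B} P = S_{A,B} E_{R_{A,B}}((0,1]) = (S_{A,B})_\mathrm{c}$. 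Applying the order-preserving $\Gamma_{A,B}$ gives $C \leq B_\mathrm{c}$, and since $B_\mathrm{c} \lessdot A$ and $B_\mathrm{c} \leq B$ were already established in Proposition \ref{P3.6}, this proves the asserted maximality.

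I expect the only delicate point to be the passage $\tilde{C}_n Q = O \Rightarrow \tilde{C} Q = O$ in the limit: it needs the monotone increasing sequence $\tilde{C}_n$ to converge strongly to $\tilde{C}$, so that $\tilde{C}_n Q \to \tilde{C} Q$ strongly, which is exactly what Remarks \ref{R2.3}(2) secures, together with the careful identification of $\tilde{C} \lessdot R_{A,B}$, where one must make sure $\Gamma_{A,B}^{-1}$ is applied within the extended domain of Remarks \ref{R2.3}(1) so that $\Gamma_{A,B}^{-1}(\alpha_n A) = \alpha_n R_{A,B}$ is legitimate. The final conjugation trick is routine once $\tilde{C} = P \tilde{C} P$ is in hand.
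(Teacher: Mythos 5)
Your proof is correct and takes essentially the same route as the paper's: transport everything through $\Gamma_{A,B}^{-1}$, use $\tilde{C}_n \leq \alpha_n R_{A,B}$ to kill the $E_{R_{A,B}}(\{0\})$-corner of each $\tilde{C}_n$, pass to the strong limit, and conjugate $\tilde{C} \leq S_{A,B}$ by $E_{R_{A,B}}((0,1])$ to conclude $\tilde{C} \leq (S_{A,B})_\mathrm{c}$. The only cosmetic differences are that you isolate the transfer $\tilde{C} \lessdot R_{A,B}$ as an explicit claim and spell out the extended-domain justification of Remarks \ref{R2.3}(1), both of which the paper leaves implicit.
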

\begin{proof}
    Let $C_n$ be an increasing sequence of positive bounded operators on $\mathcal{H}$ such that $C_n \nearrow C$ and $C_n \leq \alpha_n A$ for some $\alpha_n > 0$. Since $C_n \leq C \leq B \leq A + B$, we have
\[
\Gamma_{A,B}^{-1}(C_n) \leq \alpha_n \Gamma_{A,B}^{-1}(A) = \alpha_n R_{A,B}
\]
and $\Gamma_{A,B}^{-1}(C_n) \nearrow \Gamma_{A,B}^{-1}(C) \leq S_{A,B}$. Since $E_{R_{A,B}}(\{0\})\Gamma_{A,B}^{-1}(C_n)E_{R_{A,B}}(\{0\}) = O$, we have, as in the proof of Lemma \ref{L3.5}, $\Gamma_{A,B}^{-1}(C_n)E_{R_{A,B}}(\{0\}) = E_{R_{A,B}}(\{0\})\Gamma_{A,B}^{-1}(C_n) = O$. Taking the limit as $n \to \infty$, $\Gamma_{A,B}^{-1}(C)E_{R_{A,B}}(\{0\}) = E_{R_{A,B}}(\{0\})\Gamma_{A,B}^{-1}(C) = O$. Thus
\[
\Gamma_{A,B}^{-1}(C) 
= E_{R_{A,B}}((0,1])\Gamma_{A,B}^{-1}(C)E_{R_{A,B}}((0,1])
\leq E_{R_{A,B}}((0,1]) S_{A,B} E_{R_{A,B}}((0,1]) = (S_{A,B})_\mathrm{c}, 
\]
and hence $C \leq B_\mathrm{c}$.
\end{proof}

Here is the main but simple observation of these notes. 

\begin{theorem}\label{T3.8}
The operator $B_{c}$ coincides with Ando's $[A]B$; namely, $B_\mathrm{c} = \lim_{n \to \infty} (nA):B$ holds in the strong operator topology, where $(nA):B$ denotes the parallel sum of $nA$ and $B$ (see e.g., \cite[section 4]{FW71}).
Moreover, $[A]B = B_\mathrm{c}$ is given by the Pusz--Woronowicz functional calculus $\phi_\gtrdot(A,B)$ with function $\phi_\gtrdot(x,y) = \mathbf{1}_{(0,\infty)}(x)y$.
\end{theorem}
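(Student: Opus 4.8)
The plan is to prove the two assertions separately, treating the (easier) Pusz--Woronowicz identity first and then using it to identify the strong limit of the parallel sums.

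For the second assertion I would simply feed $\phi_\gtrdot(x,y)=\mathbf{1}_{(0,\infty)}(x)\,y$ into the explicit construction $\phi(A,B)=\Gamma_{A,B}(\phi(R_{A,B},S_{A,B}))$ recorded just after Remarks \ref{R2.3}. Since $R_{A,B}+S_{A,B}=I_{\mathcal{H}_{A,B}}$, the operator $\phi_\gtrdot(R_{A,B},S_{A,B})$ is an ordinary function of the single operator $R_{A,B}$, namely $\mathbf{1}_{(0,1]}(R_{A,B})\,(I_{\mathcal{H}_{A,B}}-R_{A,B})=S_{A,B}E_{R_{A,B}}((0,1])=(S_{A,B})_\mathrm{c}$, where the last equality is the very definition in Lemma \ref{L3.5}. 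Applying $\Gamma_{A,B}$ and recalling $B_\mathrm{c}=\Gamma_{A,B}((S_{A,B})_\mathrm{c})$ from Proposition \ref{P3.6}, one gets $\phi_\gtrdot(A,B)=B_\mathrm{c}$ at once. (Here $\phi_\gtrdot$ is homogeneous, Borel and bounded on compacta, so PW-functional calculus applies.)

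For the first assertion the crux is the identity $(nA):B=\phi_n(A,B)$, where $\phi_n(x,y)=nxy/(nx+y)$ is the harmonic mean of $nx$ and $y$; granting this, the rest is a monotone-limit argument. To establish it I would start from the variational formula for the parallel sum, $((nA):B\,\xi,\xi)=\inf_{\xi=\eta+\zeta}\{n(A\eta,\eta)+(B\zeta,\zeta)\}$ (see \cite{FW71}), and substitute $A=T_{A,B}^*R_{A,B}T_{A,B}$, $B=T_{A,B}^*S_{A,B}T_{A,B}$ coming from Lemma \ref{L2.2}. Writing $w=T_{A,B}\xi$ and $u=T_{A,B}\eta$, so that $u$ runs through the dense range of $T_{A,B}$ as $\eta$ runs through $\mathcal{H}$, the right-hand side becomes $\inf_{u\in\mathrm{ran}(T_{A,B})}F(u)$ with $F(u)=n(R_{A,B}u,u)+(S_{A,B}(w-u),w-u)$. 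As $F$ is continuous and $\mathrm{ran}(T_{A,B})$ is dense in $\mathcal{H}_{A,B}$, this infimum equals the unconstrained one over all of $\mathcal{H}_{A,B}$. Because $R_{A,B},S_{A,B}$ commute with $R_{A,B}+S_{A,B}=I_{\mathcal{H}_{A,B}}$ and $nR_{A,B}+S_{A,B}=I_{\mathcal{H}_{A,B}}+(n-1)R_{A,B}\ge I_{\mathcal{H}_{A,B}}$ is invertible, the minimiser is $u^*=(nR_{A,B}+S_{A,B})^{-1}S_{A,B}w$ and a routine spectral (scalar) computation gives $F(u^*)=(h_n(R_{A,B})w,w)$ with $h_n(r)=nr(1-r)/(nr+1-r)$. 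Hence $((nA):B\,\xi,\xi)=(h_n(R_{A,B})T_{A,B}\xi,T_{A,B}\xi)=(\Gamma_{A,B}(h_n(R_{A,B}))\xi,\xi)$, i.e.\ $(nA):B=\Gamma_{A,B}(h_n(R_{A,B}))=\phi_n(A,B)$.

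With this in hand I would finish by monotone convergence. On $[0,1]$ the functions $h_n(r)=nr(1-r)/(nr+1-r)$ increase pointwise to $\mathbf{1}_{(0,1]}(r)(1-r)$ while staying bounded by $1$, so by the spectral theorem $h_n(R_{A,B})\nearrow \mathbf{1}_{(0,1]}(R_{A,B})(I_{\mathcal{H}_{A,B}}-R_{A,B})=(S_{A,B})_\mathrm{c}$ strongly. Since $\Gamma_{A,B}(\tilde C)=T_{A,B}^*\tilde C\,T_{A,B}$ is normal along increasing nets (clear from this formula, cf.\ Remarks \ref{R2.3}(2)), applying $\Gamma_{A,B}$ yields $(nA):B=\Gamma_{A,B}(h_n(R_{A,B}))\nearrow\Gamma_{A,B}((S_{A,B})_\mathrm{c})=B_\mathrm{c}$ in the strong operator topology, which is precisely $\lim_{n\to\infty}(nA):B=B_\mathrm{c}=\phi_\gtrdot(A,B)$. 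The main obstacle is the identity $(nA):B=\phi_n(A,B)$: one must check that the infimum defining the parallel sum, which ranges only over $u\in\mathrm{ran}(T_{A,B})$, agrees with the spectral infimum taken over all of $\mathcal{H}_{A,B}$, and this is exactly where the density of $\mathrm{ran}(T_{A,B})$ together with the continuity of $F$ is essential. Everything else is routine given the framework of Section 2.
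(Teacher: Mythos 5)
Your proposal is correct, and its overall skeleton matches the paper's proof: both identify $\phi_\gtrdot(A,B)=\Gamma_{A,B}(\phi_\gtrdot(R_{A,B},S_{A,B}))=\Gamma_{A,B}((S_{A,B})_\mathrm{c})=B_\mathrm{c}$ via the explicit construction, then realize $(nA):B$ as $\Gamma_{A,B}$ applied to a function of $R_{A,B}$ and pass to the limit. The one genuine divergence is the crux you correctly flagged: the identity $(nA):B=\phi_n(A,B)$ with $\phi_n(x,y)=nxy/(nx+y)$. The paper simply cites \cite{HU21} (the discussion after Theorem 6 there) for this fact, whereas you derive it from scratch via the variational characterization of the parallel sum, pushing the infimum through $\Gamma_{A,B}$, using density of $\mathrm{ran}(T_{A,B})$ plus continuity of the quadratic functional $F$ to replace the constrained infimum by the unconstrained one, and computing the minimizer $u^*=(nR_{A,B}+S_{A,B})^{-1}S_{A,B}w$ spectrally (your algebra checks out: $F(u^*)=(S w,w)-(S(nR+S)^{-1}Sw,w)$, which on the spectrum is $(1-r)-\frac{(1-r)^2}{nr+1-r}=\frac{nr(1-r)}{nr+1-r}=h_n(r)$). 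This buys a self-contained proof where the paper outsources a step, at the modest cost of invoking the variational formula for the parallel sum (standard, going back to Anderson--Duffin; it is reasonable to cite \cite{FW71} for it, as the paper does for parallel sums generally). A second, minor difference: for the limit $h_n(R_{A,B})\to (S_{A,B})_\mathrm{c}$ the paper uses the bounded convergence theorem for the spectral integral, while you use monotonicity of $h_n$ in $n$; both are valid since $h_n\nearrow\mathbf{1}_{(0,1]}(r)(1-r)$ with $0\le h_n\le 1$, and your appeal to strong continuity of $\Gamma_{A,B}$ (Remarks \ref{R2.3}(2), immediate from $\Gamma_{A,B}(\tilde C)=T_{A,B}^*\tilde C T_{A,B}$) finishes the argument exactly as in the paper.
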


We remark that the function $\phi_\gtrdot(x,y)$ in the statement is indeed homogeneous (and trivially, bounded on any compact subset of $[0,\infty)^2$). In fact,     
\[
\phi_\gtrdot(\lambda x, \lambda y)
= 
\mathbf{1}_{(0,\infty)}(\lambda x)\lambda y
= 
\left\{
\begin{array}{ll}
\lambda y  &\quad (x > 0)\\
0 &\quad (x = 0)
\end{array}
\right\}
= 
\lambda \phi_\gtrdot(x,y), \quad (x,y) \in [0, \infty)^2,\  \lambda > 0.
\]

Although the previous proposition and \cite[Theorem 1]{Ando76} (both assert the maximality) immediately show $B_\mathrm{c} = [A]B$, we will give a self-contained proof,  i.e., without using Ando's result. 

\begin{proof} 
Since $E_{R_{A,B}}((0,1]) = \mathbf{1}_{(0,\infty)}(R_{A_,B})$, we have 
\[
B_\mathrm{c} = \Gamma_{A,B}(S_{A,B} E_{R_{A,B}}((0,1])) = \Gamma_{A,B}(\phi_\gtrdot(R_{A,B},S_{A,B})) = \phi_\gtrdot(A,B).
\]
    
For each $n$ we define $\phi_n(x,y) = \cfrac{nxy}{nx + y}$ with $\phi_n(0,0) = 0$. As remarked in \cite{HU21} (see the discussion just after Theorem 6 there) we have $(nA):B = \phi_n(A,B) = \Gamma_{A,B}(\phi_n(R_{A,B},S_{A,B}))$ for every $n$. Letting $f_n(y) \coloneqq \phi_n(1-y,y)$ we have
\[
\phi_n(R_{A,B},S_{A,B}) = f_n(S_{A,B}) = \int_{[0,1]}\frac{n(1 - y)y}{n(1 - y) + y}\,E_{S_{A,B}}(dy), 
\]
where $E_{S_{A,B}}$ denotes the spectral (projection-valued) measure of $S_{A,B}$. Since
\[
\left| 
\frac{n(1 - y)y}{n(1 - y) + y}
\right|
\leq 1
\quad 
\mathrm{and}
\quad
\frac{n(1 - y)y}{n(1 - y) + y}
\to \mathbf{1}_{(0,1]}(1 - y)y = \phi_\gtrdot(1-y,y), \quad \forall y \in [0,1],
\]
the bounded convergence theorem implies that
\[
\phi_n(R_{A,B},S_{A,B}) = f_n(S_{A,B}) \to \phi_\gtrdot(R_{A,B},S_{A,B}) = S_{A,B}E_{R_{A,B}}((0,1]) = S_\mathrm{c}
\]
in the strong operator topology. Hence we obtain $\phi_\gtrdot(A,B) = \lim_{n\to\infty} (nA):B$ in the strong operator topology too by Remarks \ref{R2.3}(2).
\end{proof}

We remark that Proposition \ref{P3.7} plus Theorem \ref{T3.8} imply \cite[Theorem 2]{Ando76}. 

The next corollary is immediate from Theorem \ref{T2.4} thanks to the above theorem. (We first learned from Hiai that Kosaki already confirmed the formula below many years ago.)

\begin{corollary}\label{C3.9} {\rm(H.\ Kosaki)} For each $k=1,2$, let $A_k,B_k$ be two positive bounded operators on a Hilbert space $\mathcal{H}_k$. Then 
\[
[A_1\otimes A_2](B_1\otimes B_2) = [A_1]B_1\otimes [A_2]B_2.
\]
\end{corollary}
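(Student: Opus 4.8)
The plan is to combine two facts already established in the paper: the tensor-product multiplicativity of PW-functional calculus (Theorem \ref{T2.4}) and the identification of Ando's maximal absolutely continuous part as a PW-functional calculus (Theorem \ref{T3.8}). By Theorem \ref{T3.8} we have $[A_k]B_k = \phi_\gtrdot(A_k,B_k)$ for $k=1,2$, and likewise $[A_1\otimes A_2](B_1\otimes B_2) = \phi_\gtrdot(A_1\otimes A_2,\,B_1\otimes B_2)$, all with the single function $\phi_\gtrdot(x,y) = \mathbf{1}_{(0,\infty)}(x)y$. So the corollary reduces entirely to checking that this particular $\phi_\gtrdot$ satisfies the multiplicativity hypothesis of Theorem \ref{T2.4}.

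Thus the first and essentially only step I would carry out is to verify the functional identity
\[
\phi_\gtrdot(x_1 x_2,\, y_1 y_2) = \phi_\gtrdot(x_1,y_1)\,\phi_\gtrdot(x_2,y_2)
\quad \text{for all } (x_1,y_1),(x_2,y_2) \in [0,\infty)^2.
\]
This is a short pointwise computation: the left-hand side is $\mathbf{1}_{(0,\infty)}(x_1 x_2)\,y_1 y_2$, and since $x_1 x_2 > 0$ precisely when both $x_1 > 0$ and $x_2 > 0$, the indicator factors as $\mathbf{1}_{(0,\infty)}(x_1 x_2) = \mathbf{1}_{(0,\infty)}(x_1)\,\mathbf{1}_{(0,\infty)}(x_2)$, which rearranges into the product of the two one-variable expressions. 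Once this identity is in hand, Theorem \ref{T2.4} applies directly to give
\[
\phi_\gtrdot(A_1\otimes A_2,\, B_1\otimes B_2) = \phi_\gtrdot(A_1,B_1)\otimes\phi_\gtrdot(A_2,B_2),
\]
and translating both sides back through Theorem \ref{T3.8} yields exactly the claimed formula.

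I do not expect any genuine obstacle here, which is presumably why the paper calls the corollary ``immediate.'' The one point that merits a moment's care is that $\phi_\gtrdot$ is a discontinuous (indicator-type) function rather than a continuous one, so one should confirm that it meets the standing hypotheses under which Theorem \ref{T2.4} was proved — namely that it is a real-valued homogeneous Borel function, bounded on compact subsets of $[0,\infty)^2$. Homogeneity and the boundedness-on-compacts condition are already remarked upon in the paper immediately after the statement of Theorem \ref{T3.8}, and Borel measurability of $\mathbf{1}_{(0,\infty)}(x)y$ is clear. Since the proof of Theorem \ref{T2.4} proceeds via spectral integration against the product measure $E_1(dx_1)\otimes E_2(dx_2)$ and only uses the multiplicative identity, no continuity of $\phi_\gtrdot$ is actually needed, so the argument goes through verbatim for this Borel function.
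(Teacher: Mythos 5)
Your proposal is correct and matches the paper's own route exactly: the paper declares the corollary ``immediate from Theorem \ref{T2.4} thanks to the above theorem,'' i.e.\ precisely your reduction of $[A]B$ to $\phi_\gtrdot(A,B)$ via Theorem \ref{T3.8} followed by the multiplicativity $\phi_\gtrdot(x_1x_2,y_1y_2)=\phi_\gtrdot(x_1,y_1)\,\phi_\gtrdot(x_2,y_2)$ fed into Theorem \ref{T2.4}. Your extra check that the discontinuous Borel function $\phi_\gtrdot$ satisfies the standing hypotheses (homogeneous, Borel, bounded on compacts) is a sound precaution, and indeed the paper verifies homogeneity right after Theorem \ref{T3.8}.
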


The next proposition says that $[A]B$ admits a very similar form to \cite[Theorem 6]{Ko84}. 

\begin{proposition}\label{P3.10}
$[A]B$ is of the form 
\[
[A]B = B^{1/2} P_{A,B} B^{1/2}, 
\]
where $P_{A,B}$ is the orthogonal projection onto $\{\xi \in \mathcal{H}\:;\: Y_{A,B}Y_{A,B}^*\xi = \xi\}^{\perp}$ and coincides with $I - V_{A,B} V_{A,B}^* + V_{A,B} U_{A,B}^* U_{A,B} V_{A,B}^*$.
\end{proposition}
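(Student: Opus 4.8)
The plan is to feed the Pusz--Woronowicz description of $[A]B$ from Theorem \ref{T3.8} into Lemmas \ref{L2.6} and \ref{L2.7}, whose outputs are precisely tailored to this kind of statement. By Theorem \ref{T3.8} we have $[A]B = \phi_\gtrdot(A,B)$ with $\phi_\gtrdot(x,y) = \mathbf{1}_{(0,\infty)}(x)y$. First I would restrict $\phi_\gtrdot$ to the simplex $\{(x,y)\,;\,x+y=1\}$: writing $x = 1-y$ one gets $\phi_\gtrdot(1-y,y) = \mathbf{1}_{(0,\infty)}(1-y)\,y = y\,\mathbf{1}_{[0,1)}(y)$, so that $\phi_\gtrdot(x,y) = yf(y)$ holds on the simplex with the indicator $f = \mathbf{1}_{[0,1)}$. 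Lemma \ref{L2.6} then gives immediately $[A]B = B^{1/2} V_{A,B}\, f(S_{A,B})\, V_{A,B}^* B^{1/2}$, and since $f$ is an indicator, $V_{A,B} f(S_{A,B}) V_{A,B}^*$ is a projection.

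Next I would invoke the second displayed identity of Lemma \ref{L2.7} with this same $f$ to transport the compression through $Y_{A,B} Y_{A,B}^*$, obtaining $[A]B = B^{1/2} f(Y_{A,B}Y_{A,B}^*) B^{1/2}$. It remains to identify $f(Y_{A,B}Y_{A,B}^*)$ with the projection $P_{A,B}$ of the statement. Since $Y_{A,B}$ is a contraction, $Y_{A,B}Y_{A,B}^*$ has spectrum in $[0,1]$, so $f(Y_{A,B}Y_{A,B}^*) = \mathbf{1}_{[0,1)}(Y_{A,B}Y_{A,B}^*) = I - \mathbf{1}_{\{1\}}(Y_{A,B}Y_{A,B}^*)$; the subtracted term is exactly the spectral projection onto the eigenspace $\{\xi\,;\,Y_{A,B}Y_{A,B}^*\xi = \xi\}$, whence $f(Y_{A,B}Y_{A,B}^*)$ is the orthogonal projection onto its orthocomplement, i.e.\ $f(Y_{A,B}Y_{A,B}^*) = P_{A,B}$. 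This settles the displayed formula $[A]B = B^{1/2} P_{A,B} B^{1/2}$.

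For the explicit expression of $P_{A,B}$, I would return to formula \eqref{Eq2.1} of Lemma \ref{L2.7}. As $f(0) = \mathbf{1}_{[0,1)}(0) = 1$, that formula reads $P_{A,B} = f(Y_{A,B}Y_{A,B}^*) = (I - V_{A,B}V_{A,B}^*) + V_{A,B} f(S_{A,B}) V_{A,B}^*$, so everything reduces to showing $f(S_{A,B}) = U_{A,B}^* U_{A,B}$. Using $R_{A,B} + S_{A,B} = I_{\mathcal{H}_{A,B}}$ (Lemma \ref{L2.1}), spectral calculus turns $f(S_{A,B}) = \mathbf{1}_{[0,1)}(S_{A,B})$ into $\mathbf{1}_{(0,1]}(R_{A,B}) = E_{R_{A,B}}((0,1])$, the support projection of $R_{A,B}$. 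On the other hand $R_{A,B} = |X_{A,B}|^2$ and $X_{A,B} = U_{A,B}|X_{A,B}|$, so the initial projection $U_{A,B}^* U_{A,B}$ of the polar decomposition is precisely the projection onto $(\ker|X_{A,B}|)^\perp = (\ker R_{A,B})^\perp = \overline{\mathrm{ran}}(R_{A,B})$, i.e.\ $E_{R_{A,B}}((0,1])$. Hence $f(S_{A,B}) = U_{A,B}^* U_{A,B}$ and the asserted identity $P_{A,B} = I - V_{A,B}V_{A,B}^* + V_{A,B} U_{A,B}^* U_{A,B} V_{A,B}^*$ follows.

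The computation is essentially bookkeeping once the correct function $f = \mathbf{1}_{[0,1)}$ is recognized. The only place demanding care, and the step I expect to be the main obstacle, is the spectral-calculus identification of $f(S_{A,B})$ with the initial projection $U_{A,B}^* U_{A,B}$: here one must correctly track the value $f(0) = 1$ in \eqref{Eq2.1} (this is exactly what produces the $I - V_{A,B}V_{A,B}^*$ summand) and correctly pass between the spectral data of $S_{A,B}$, $R_{A,B}$, and $|X_{A,B}|$. I do not anticipate any genuine difficulty beyond this point.
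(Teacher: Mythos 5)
Your proposal is correct and follows essentially the same route as the paper's own proof: identify $[A]B=\phi_\gtrdot(A,B)$ via Theorem \ref{T3.8}, apply Lemma \ref{L2.6} with the indicator $f(y)=\mathbf{1}_{(0,\infty)}(1-y)=\mathbf{1}_{[0,1)}(y)$, pass to $f(Y_{A,B}Y_{A,B}^*)$ via Lemma \ref{L2.7}, and use $\mathbf{1}_{(0,\infty)}(R_{A,B})=U_{A,B}^*U_{A,B}$ for the explicit form of $P_{A,B}$. All the spectral bookkeeping you flag (tracking $f(0)=1$ in \eqref{Eq2.1} and converting $\mathbf{1}_{[0,1)}(S_{A,B})$ into the support projection of $R_{A,B}$ via $R_{A,B}+S_{A,B}=I_{\mathcal{H}_{A,B}}$) is carried out correctly.
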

\begin{proof}
By Lemmas \ref{L2.6} and \ref{L2.7} we have
\[
B_\mathrm{c}
= 
B^{1/2} V_{A,B} \mathbf{1}_{(0,\infty)}(I - S_{A,B}) V_{A,B}^* B^{1/2}
= 
B^{1/2} \mathbf{1}_{(0,\infty)}(I - Y_{A,B}Y_{A,B}^*)B^{1/2}.
\]
Hence the desired assertion follows, since $\mathbf{1}_{(0,\infty)}(I - Y_{A,B}Y_{A,B}^*)$ is the orthogonal projection onto the closed subspace $\{\xi \in \mathcal{H}\:;\: Y_{A,B}Y_{A,B}^*\xi = \xi\}^{\perp}$.

Using Lemma \ref{L2.7} with $f(y) = \mathbf{1}_{(0,\infty)}(1 - y)$, we have
\[
\mathbf{1}_{(0,\infty)}(I - Y_{A,B}Y_{A,B}^*) 
= I - V_{A,B} V_{A,B}^* + V_{A,B} \mathbf{1}_{(0,\infty)}(R_{A,B})V_{A,B}^*.
\]
Since $\mathbf{1}_{(0,\infty)}(R_{A,B}) = U_{A,B}^* U_{A,B}$, we obtain the desired assertion. 
\end{proof}

One of the main results of \cite{Ando76} is Theorem 5 there, which can be regarded as an explicit description of the projection $P_{A,B}$ in terms of $A$ and $B$ (see \cite{Ko84} from the viewpoint of unbounded operators). We reconstruct its proof too.

\begin{lemma}\label{L3.11}
We have
\[
B^{1/2}(I - Y_{A,B}Y_{A,B}^{*})B^{1/2} = A^{1/2}(I - X_{A,B}X_{A,B}^*)A^{1/2}.
\]
\end{lemma}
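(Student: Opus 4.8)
The plan is to strip off the outer copies of $T_{A,B}$ from both sides and reduce each to a single expression in the commuting operators $R_{A,B}$ and $S_{A,B}$. The key structural inputs are the factorizations $A^{1/2} = X_{A,B} T_{A,B}$ and $B^{1/2} = Y_{A,B} T_{A,B}$ from Section 2, together with their adjoints $A^{1/2} = T_{A,B}^* X_{A,B}^*$ and $B^{1/2} = T_{A,B}^* Y_{A,B}^*$, which hold because $A^{1/2}$ and $B^{1/2}$ are self-adjoint.

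First I would rewrite the left-hand side, using $B^{1/2} = Y_{A,B} T_{A,B}$ on the right factor and $B^{1/2} = T_{A,B}^* Y_{A,B}^*$ on the left factor, as
\[
B^{1/2}(I - Y_{A,B}Y_{A,B}^*)B^{1/2} = T_{A,B}^*\, Y_{A,B}^*(I - Y_{A,B}Y_{A,B}^*)Y_{A,B}\, T_{A,B}.
\]
The inner operator then simplifies using $S_{A,B} = Y_{A,B}^* Y_{A,B}$:
\[
Y_{A,B}^*(I - Y_{A,B}Y_{A,B}^*)Y_{A,B} = S_{A,B} - S_{A,B}^2 = S_{A,B}(I_{\mathcal{H}_{A,B}} - S_{A,B}) = S_{A,B} R_{A,B},
\]
where the last equality uses $I_{\mathcal{H}_{A,B}} - S_{A,B} = R_{A,B}$ from Lemma \ref{L2.1}. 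Hence the left-hand side equals $T_{A,B}^* S_{A,B} R_{A,B} T_{A,B}$.

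The same manipulation applied to the right-hand side, now with $A^{1/2} = X_{A,B} T_{A,B}$ and $R_{A,B} = X_{A,B}^* X_{A,B}$, yields $A^{1/2}(I - X_{A,B}X_{A,B}^*)A^{1/2} = T_{A,B}^* R_{A,B}(I_{\mathcal{H}_{A,B}} - R_{A,B}) T_{A,B} = T_{A,B}^* R_{A,B} S_{A,B} T_{A,B}$. Since $S_{A,B} = I_{\mathcal{H}_{A,B}} - R_{A,B}$ is a function of $R_{A,B}$, the two operators $R_{A,B}$ and $S_{A,B}$ commute, so $S_{A,B} R_{A,B} = R_{A,B} S_{A,B}$ and the two sides coincide. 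I do not anticipate a genuine obstacle; the only point requiring care is the bookkeeping of the factorizations and their adjoints, namely keeping the domain/codomain restriction to $\mathcal{H}_{A,B}$ straight so that $X_{A,B}^* X_{A,B}$ and $Y_{A,B}^* Y_{A,B}$ are genuinely the operators $R_{A,B}, S_{A,B} \in B(\mathcal{H}_{A,B})$ and Lemma \ref{L2.1} applies. Once both sides are written as $T_{A,B}^* R_{A,B} S_{A,B} T_{A,B}$, equality is immediate.
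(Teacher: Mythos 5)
Your proof is correct. It differs in organization from the paper's: the paper computes the single sandwich $B^{1/2}Y_{A,B}Y_{A,B}^*B^{1/2} = T_{A,B}^*(I-X_{A,B}^*X_{A,B})^2T_{A,B}$, expands the square into $A+B-2T_{A,B}^*R_{A,B}T_{A,B}+T_{A,B}^*R_{A,B}^2T_{A,B} = B - A + A^{1/2}X_{A,B}X_{A,B}^*A^{1/2}$, and then rearranges to get the lemma, whereas you reduce both sides symmetrically to the common expression $T_{A,B}^*R_{A,B}S_{A,B}T_{A,B}$ via the identities $Y_{A,B}^*(I-Y_{A,B}Y_{A,B}^*)Y_{A,B}=S_{A,B}R_{A,B}$ and $X_{A,B}^*(I-X_{A,B}X_{A,B}^*)X_{A,B}=R_{A,B}S_{A,B}$. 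Both arguments use exactly the same inputs (the factorizations $A^{1/2}=X_{A,B}T_{A,B}$, $B^{1/2}=Y_{A,B}T_{A,B}$ and Lemma \ref{L2.1}), so neither is deeper, but your version is tidier: it avoids the expansion-and-cancellation bookkeeping, and it identifies the common value of the two sides as $T_{A,B}^*R_{A,B}S_{A,B}T_{A,B}$, which is precisely the parallel sum $A:B$ computed via the PW-functional calculus for $\phi(x,y)=xy/(x+y)$. In other words, your route anticipates the content of Remark \ref{R3.12}, where the paper records these same identities $R_{A,B}S_{A,B}=X_{A,B}^*(I-X_{A,B}X_{A,B}^*)X_{A,B}=Y_{A,B}^*(I-Y_{A,B}Y_{A,B}^*)Y_{A,B}$ as expressions of the parallel sum; the paper's proof of the lemma, by contrast, is a self-contained computation that keeps the lemma independent of that interpretation. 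Your attention to the domain/codomain conventions (so that $R_{A,B},S_{A,B}$ live on $\mathcal{H}_{A,B}$ and Lemma \ref{L2.1} applies) is exactly the right point of care, and there is no gap.
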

\begin{proof}
We have
\begin{align*}
B^{1/2}Y_{A,B}Y_{A,B}^* B^{1/2}
&= 
T_{A,B}^* Y_{A,B}^* Y_{A,B}Y_{A,B}^* Y_{A,B} T_{A,B}\\
&= 
T_{A,B}^*(I_{\mathcal{H}_{A,B}} - X_{A,B}^* X_{A,B})(I_{\mathcal{H}_{A,B}} - X_{A,B}^* X_{A,B})T_{A,B} \quad \text{(bt Lemma \ref{L2.1})} \\
&= 
A + B 
- 2 T_{A,B}^* X_{A,B}^* X_{A,B} T_{A,B}
+ T_{A,B}^* X_{A,B}^* X_{A,B}X_{A,B}^* X_{A,B} T_{A,B} \\
&= 
B - A + A^{1/2}X_{A,B}X_{A,B}^* A^{1/2},
\end{align*}
implying the desired equation.
\end{proof}

\begin{remark}\label{R3.12} {\rm 
The above lemma can be understood as various expressions of the parallel sum. Namely, 
\begin{align*}
A:B = B^{1/2}(I - Y_{A,B}Y_{A,B}^*)B^{1/2} 
&= 
A^{1/2}(I - X_{A,B}X_{A,B}^*)A^{1/2} \\
&= 
B^{1/2}Y_{A,B}^* X_{A,B}A^{1/2} = A^{1/2}X_{A,B}^* Y_{A,B}B^{1/2}.
\end{align*}

The proof is as follows. As in the proof of \cite[Theorem 4.2]{FW71}, using a two-by-two matrix argument we can show that 
\begin{align*}
A^{1/2}(I - X_{A,B}^* X_{A,B}) 
= B^{1/2}Y_{A,B}^* X_{A,B},\quad A^{1/2} X_{A,B}Y_{A,B}^* 
= B^{1/2} (I - Y_{A,B}Y_{A,B}^*).
\end{align*}
On the other hand, since $A:B$ is given by the Pusz--Woronowicz functional calculus associated with $\phi(x,y) = xy/(x+y)$, we have
\[
A : B 
= 
\phi(A, B)
= T_{A,B}^* \phi(R_{A,B}, S_{A,B})T_{A,B}
= T_{A,B}^* R_{A,B}S_{A,B} T_{A,B}.
\]
Therefore, we have
\begin{align*}
R_{A,B}S_{A,B}
& = X_{A,B}^* X_{A,B}Y_{A,B}^* Y_{A,B} \\
&= S_{A,B} R_{A,B} = Y_{A,B}^* Y_{A,B} X_{A,B}^* X_{A,B} \\
&= R_{A,B}(I_{\mathcal{H}_{A,B}} - R_{A,B}) = X_{A,B}^*(I_{\mathcal{H}_{A,B}} - X_{A,B}X_{A,B}^*)X_{A,B} \\
&= (I - S_{A,B})S_{A,B} = Y_{A,B}^*(I - Y_{A,B}Y_{A,B}^*)Y_{A,B}.
\end{align*}
Hence we can obtain the above expressions of the parallel sum.}
\end{remark}

The next lemma is due to Izumino \cite[Lemma 2.3]{Iz89-1} (see also \cite[Theorem 4.2]{FW71}), but we will give its proof for the sake of completeness. 

\begin{lemma}\label{L3.13}
$\mathrm{ran}((I - Y_{A,B}Y_{A,B}^{*})^{1/2}) = \{\xi \in \mathcal{H}\:;\: B^{1/2}\xi \in \mathrm{ran}(A^{1/2})\}$.
\end{lemma}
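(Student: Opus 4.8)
The plan is to prove the two inclusions separately, in both cases by appealing to Douglas's range inclusion theorem (\cite[Theorem 2.1]{FW71}), the same device already used repeatedly in Section 2. Throughout I abbreviate $X=X_{A,B}$, $Y=Y_{A,B}$, $T=T_{A,B}$, $R=R_{A,B}$, $S=S_{A,B}$, and set $Z\coloneqq I-YY^{*}$, so that $0\le Z\le I$ and the left-hand side is exactly $\mathrm{ran}(Z^{1/2})$.

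For the inclusion $\mathrm{ran}(Z^{1/2})\subseteq\{\xi : B^{1/2}\xi\in\mathrm{ran}(A^{1/2})\}$ the idea is simply to factor the parallel sum. Using Lemma \ref{L3.11} together with $I-XX^{*}\le I$ one gets
\[
(B^{1/2}Z^{1/2})(B^{1/2}Z^{1/2})^{*}=B^{1/2}ZB^{1/2}=A^{1/2}(I-XX^{*})A^{1/2}\le A=(A^{1/2})(A^{1/2})^{*},
\]
so Douglas's theorem gives $\mathrm{ran}(B^{1/2}Z^{1/2})\subseteq\mathrm{ran}(A^{1/2})$; taking $\xi=Z^{1/2}\zeta$ then yields $B^{1/2}\xi=B^{1/2}Z^{1/2}\zeta\in\mathrm{ran}(A^{1/2})$. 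This direction is routine once the factorization is written down.

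The reverse inclusion is the substantive one, and I expect its first step to be the main obstacle. Suppose $B^{1/2}\xi=A^{1/2}\eta$. The key is to lift this scalar-level membership to the genuinely stronger identity $Y^{*}\xi=X^{*}\eta$ in $\mathcal{H}_{A,B}$. To see it, write $B^{1/2}=(YT)^{*}=T^{*}Y^{*}$ and $A^{1/2}=T^{*}X^{*}$, so that $T^{*}(Y^{*}\xi-X^{*}\eta)=0$; since $T^{*}\psi=(A+B)^{1/2}\psi$ for $\psi\in\mathcal{H}_{A,B}=\ker((A+B)^{1/2})^{\perp}$, the map $T^{*}$ is injective on $\mathcal{H}_{A,B}$ and the identity follows. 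Recognising that the vector equation can be pulled back into $\mathcal{H}_{A,B}$ in this way is what makes the whole argument go through.

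With $Y^{*}\xi=X^{*}\eta$ in hand, I would verify the vector form of Douglas's criterion, $|(\xi,\zeta)|\le c\,\|Z^{1/2}\zeta\|$ for all $\zeta$, which is equivalent to $\xi\in\mathrm{ran}(Z^{1/2})$. Splitting $\zeta=Z\zeta+YY^{*}\zeta$ gives
\[
(\xi,\zeta)=(Z^{1/2}\xi,Z^{1/2}\zeta)+(Y^{*}\xi,Y^{*}\zeta),
\]
whose first term is at most $\|\xi\|\,\|Z^{1/2}\zeta\|$. For the cross term I substitute $Y^{*}\xi=X^{*}\eta$ to obtain $(Y^{*}\xi,Y^{*}\zeta)=(\eta,XY^{*}\zeta)$, and then control $\|XY^{*}\zeta\|$ through the commutation identity (using $R=I-Y^{*}Y$ from Lemma \ref{L2.1})
\[
YRY^{*}=Y(I-Y^{*}Y)Y^{*}=YY^{*}(I-YY^{*})=YY^{*}Z=Z-Z^{2}\le Z,
\]
so that $\|XY^{*}\zeta\|^{2}=(YRY^{*}\zeta,\zeta)\le(Z\zeta,\zeta)=\|Z^{1/2}\zeta\|^{2}$ and hence $|(Y^{*}\xi,Y^{*}\zeta)|\le\|\eta\|\,\|Z^{1/2}\zeta\|$. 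Adding the two estimates yields $|(\xi,\zeta)|\le(\|\xi\|+\|\eta\|)\,\|Z^{1/2}\zeta\|$, and Douglas's theorem concludes $\xi\in\mathrm{ran}(Z^{1/2})$. Apart from the injectivity of $T^{*}$ and the bound $YY^{*}Z\le Z$, everything else is bookkeeping.
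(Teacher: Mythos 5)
Your proposal is correct and takes essentially the same route as the paper: the inclusion $(\subseteq)$ rests on Lemma \ref{L3.11} (via Douglas majorization, where the paper equivalently uses $\mathrm{ran}(T)=\mathrm{ran}((TT^*)^{1/2})$), and for $(\supseteq)$ you perform exactly the paper's key step of lifting $B^{1/2}\xi = A^{1/2}\eta$ to $Y_{A,B}^*\xi = X_{A,B}^*\eta$ through the dense range of $T_{A,B}$, then exploit the same commutation identity $Y_{A,B}(I-Y_{A,B}^*Y_{A,B})Y_{A,B}^* = (I-Y_{A,B}Y_{A,B}^*)Y_{A,B}Y_{A,B}^*$. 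The only (cosmetic) difference is the finishing device: the paper decomposes $\xi = (I-Y_{A,B}Y_{A,B}^*)\xi + Y_{A,B}X_{A,B}^*\eta$ and concludes by range bookkeeping, while you verify the equivalent vector form of Douglas's criterion $|(\xi,\zeta)| \le c\,\|(I-Y_{A,B}Y_{A,B}^*)^{1/2}\zeta\|$.
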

\begin{proof}
Since $\mathrm{ran}(T) = \mathrm{ran}((TT^{*})^{1/2})$ for any bounded operator $T$, we have, by Lemma \ref{L3.11}, 
\begin{align*}
&\mathrm{ran}(B^{1/2}(I - Y_{A,B}Y_{A,B}^{*})^{1/2}) \\
&= \mathrm{ran}((B^{1/2}(I - Y_{A,B}Y_{A,B}^{*})B^{1/2})^{1/2}) \\
&= \mathrm{ran}((A^{1/2}(I - X_{A,B}X_{A,B}^{*})A^{1/2})^{1/2}) \\
&= \mathrm{ran}(A^{1/2}(I - X_{A,B}X_{A,B}^{*})^{1/2}) \subseteq \mathrm{ran}(A^{1/2}).
\end{align*}
Hence we have shown ($\subseteq$).

On the other hand, let $\xi \in \mathcal{H}$ be such that $B^{1/2}\xi \in \mathrm{ran}(A^{1/2})$. Then, $B^{1/2}\xi = A^{1/2} \eta$ for some $\eta \in \mathcal{H}$. Hence, $T_{A,B}^* Y_{A,B}^{*}\xi = T_{A,B}^* X_{A,B}^{*}\eta$ so that $Y_{A,B}(Y_{A,B}^{*}\xi - X_{A,B}^{*}\eta) = 0$ since $T_{A,B}$ has a dense range. Therefore, 
\[
\xi = (I - Y_{A,B}Y_{A,B}^*)\xi + Y_{A,B} X_{A,B}^*\eta 
\in (I - Y_{A,B}Y_{A,B}^*)\mathcal{H} + Y_{A,B} X_{A,B}^*\mathcal{H}.
\]
Since $X_{A,B}^{*}X_{A,B} + Y_{A,B}^{*}Y_{A,B} = R_{A,B} + S_{A,B} = I_{\mathcal{H}_{A,B}}$, we have 
\[
Y_{A,B}X_{A,B}^* X_{A,B}Y_{A,B}^* = Y_{A,B}(I - Y_{A,B}^* Y_{A,B})Y_{A,B}^* = (I - Y_{A,B}Y_{A,B}^*)Y_{A,B}Y_{A,B}^*.
\]
Hence we obtain that  
\begin{align*}
\mathrm{ran}(Y_{A,B}X_{A,B}^*) 
&= \mathrm{ran}((Y_{A,B}X_{A,B}^* X_{A,B}Y_{A,B}^*)^{1/2}) \\
&= \mathrm{ran}((I - Y_{A,B}Y_{A,B}^*)^{1/2}(Y_{A,B}Y_{A,B}^*)^{1/2})
\subseteq 
\mathrm{ran}((I - Y_{A,B}Y_{A,B}^*)^{1/2}).
\end{align*}
Thus we have shown ($\supseteq$).
\end{proof}

The next proposition includes \cite[Theorem 5]{Ando76}. 

\begin{proposition}\label{P3.14}
The range of $P_{A,B}$ is exactly the closure of $\{\xi \in \mathcal{H}\:;\: B^{1/2}\xi \in \mathrm{ran}(A^{1/2})\}$. In particular, the following are equivalent{\rm:}
\begin{itemize}
\item[(1)] $B \lessdot A$. 
\item[(2)] $B = [A]B$. 
\item[(3)] $P_{A,B} = I$. 
\end{itemize}
\end{proposition}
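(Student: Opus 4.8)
The plan is to obtain the range description as an immediate consequence of the two structural facts already in place, Proposition~\ref{P3.10} and Lemma~\ref{L3.13}, and then to read off the equivalences from this description together with the maximality/decomposition results of Propositions~\ref{P3.6} and~\ref{P3.7}.

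For the range statement I would start from the formula $P_{A,B} = \mathbf{1}_{(0,\infty)}(I - Y_{A,B}Y_{A,B}^*)$ of Proposition~\ref{P3.10}. Since $Y_{A,B}$ is a contraction, $I - Y_{A,B}Y_{A,B}^*$ is a positive operator, and its spectral projection $\mathbf{1}_{(0,\infty)}(\,\cdot\,)$ is exactly the orthogonal projection onto the closure of its range; as $\overline{\mathrm{ran}}(T) = \overline{\mathrm{ran}}(T^{1/2})$ for any positive $T$, this gives $\mathrm{ran}(P_{A,B}) = \overline{\mathrm{ran}}((I - Y_{A,B}Y_{A,B}^*)^{1/2})$. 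Lemma~\ref{L3.13} identifies $\mathrm{ran}((I - Y_{A,B}Y_{A,B}^*)^{1/2})$ with $\{\xi \in \mathcal{H}\:;\: B^{1/2}\xi \in \mathrm{ran}(A^{1/2})\}$, and taking closures yields the first assertion.

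For the equivalences I would argue as follows. The implication (1)$\Rightarrow$(2) comes from Proposition~\ref{P3.7}: if $B \lessdot A$, then $B$ is itself an $A$-absolutely continuous operator dominated by $B$, so maximality forces $B \leq B_\mathrm{c} = [A]B$ (Theorem~\ref{T3.8}), while $[A]B = B_\mathrm{c} \leq B$ always holds since $B = B_\mathrm{c} + B_\mathrm{s}$ with $B_\mathrm{s} \geq O$; hence $B = [A]B$. The converse (2)$\Rightarrow$(1) is immediate from Proposition~\ref{P3.6}, which asserts that $B_\mathrm{c}$ is $A$-absolutely continuous. For (2)$\Leftrightarrow$(3), the direction (3)$\Rightarrow$(2) is trivial from $[A]B = B^{1/2}P_{A,B}B^{1/2}$. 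For (2)$\Rightarrow$(3), from $B = B^{1/2}P_{A,B}B^{1/2}$ I would deduce $B^{1/2}(I - P_{A,B})B^{1/2} = O$, whence $(I - P_{A,B})B^{1/2} = O$ because $I - P_{A,B}$ is a projection and the left side equals $((I - P_{A,B})B^{1/2})^*((I - P_{A,B})B^{1/2})$; thus $\overline{\mathrm{ran}}(B^{1/2}) \subseteq \mathrm{ran}(P_{A,B})$.

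The one point that needs care — the main, though minor, obstacle — is that $\overline{\mathrm{ran}}(B^{1/2}) \subseteq \mathrm{ran}(P_{A,B})$ does not by itself give $P_{A,B} = I$, so I would combine it with the observation that $\ker(B^{1/2}) \subseteq \mathrm{ran}(P_{A,B})$ always holds. The latter is immediate from the range description just proved, since any $\xi \in \ker(B^{1/2})$ satisfies $B^{1/2}\xi = 0 \in \mathrm{ran}(A^{1/2})$ and therefore lies in $\mathrm{ran}(P_{A,B})$. As $\ker(B^{1/2})$ and $\overline{\mathrm{ran}}(B^{1/2})$ together span $\mathcal{H}$, these two inclusions force $\mathrm{ran}(P_{A,B}) = \mathcal{H}$, i.e. $P_{A,B} = I$, which completes the equivalence.
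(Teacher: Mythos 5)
Your proposal is correct and takes essentially the same route as the paper's own proof: the range description via $P_{A,B} = \mathbf{1}_{(0,\infty)}(I - Y_{A,B}Y_{A,B}^*)$ combined with Lemma \ref{L3.13}, and for (2)$\Rightarrow$(3) the same two-step argument, namely $(I-P_{A,B})B^{1/2}=O$ giving $\overline{\mathrm{ran}}(B^{1/2})\subseteq \mathrm{ran}(P_{A,B})$, supplemented by the observation that $\ker(B^{1/2})\subseteq \mathrm{ran}(P_{A,B})$ always holds by the range description. The only cosmetic difference is that you spell out (1)$\Leftrightarrow$(2) from the maximality in Proposition \ref{P3.7} together with $B_\mathrm{c}\leq B$ and Proposition \ref{P3.6}, where the paper simply cites Propositions \ref{P3.7} and \ref{P3.6} with Theorem \ref{T3.8}.
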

\begin{proof}
Note that $P_{A,B} = \mathbf{1}_{(0,\infty)}(I - Y_{A,B}Y_{A,B}^{*})$ is the orthogonal projection onto the $\ker(I - Y_{A,B}Y_{A,B}^{*})^{\perp}$. Since $\ker(I - Y_{A,B}Y_{A,B}^{*}) = \ker((I - Y_{A,B}Y_{A,B}^{*})^{1/2})$, we have
\begin{align*}
\ker(I - Y_{A,B}Y_{A,B}^{*})^{\perp}
&= 
\ker((I - Y_{A,B}Y_{A,B}^{*})^{1/2})^{\perp} \\
&= 
\overline{\mathrm{ran}(I - Y_{A,B}Y_{A,B}^{*})^{1/2})} \\
&= 
\overline{\{\xi \in \mathcal{H}\:;\: B^{1/2}\xi \in \mathrm{ran}(A^{1/2})\}}
\end{align*}
by Lemma \ref{L3.13}. 

\medskip
(1) $\Leftrightarrow$ (2): This follows from Proposition \ref{P3.7} ($[A]B$ is the maximal $A$-absolutely continuous part) plus Theorem \ref{T3.8}. 

(2) $\Rightarrow$ (3): Proposition \ref{P3.10} shows that $(I-P_{A,B})B^{1/2} = O$. Hence $(I-P_{A,B})\xi = 0$ for every $\xi \in \overline{\mathrm{ran}}(B^{1/2}) = \ker(B^{1/2})^\perp$. Thanks to the first part of assertion we see that $(I-P_{A,B})\eta = 0$ for all $\eta \in \ker(B^{1/2}) \subseteq \mathrm{ran}(P_{A,B})$. Hence $P_{A,B}=I$. 

(3) $\Rightarrow$ (2): This is trivial. 
\end{proof}

\begin{remark}\label{R3.15} {\rm 
Izumino \cite[Lemma 2.2]{Iz89-1} essentially showed $P_{A,B} = I - Y_{A,B}Y_{A,B}^* + Y_{A,B}U_{A,B}^* U_{A,B} Y_{A,B}^*$. We can also show $P_{A,B} = I - V_{A,B} V_{A,B}^* + V_{A,B} U_{A,B}^* U_{A,B} V_{A,B}^*$ by a direct calculation.

In fact, since $I - U_{A,B}^* U_{A,B}$ is the orthogonal projection onto $\ker(R_{A,B}) = \{\xi \in \mathcal{H}\:;\: R_{A,B}\xi = 0\} = \{\xi \in \mathcal{H}\:;\: S_{A,B}\xi = \xi\} = \{\xi \in \mathcal{H}\:;\: |Y_{A,B}|\xi = \xi\}$, we have $|Y_{A,B}|(I - U_{A,B}^* U_{A,B}) = I - U_{A,B}^* U_{A,B}$. Moreover, we have $|Y_{A,B}|(I - U_{A,B} U_{A,B}^*)|Y_{A,B}| = (|Y_{A,B}|(I - U_{A,B} U_{A,B}^*))(|Y_{A,B}|(I - U_{A,B} U_{A,B}^*))^*  = I - U_{A,B} U_{A,B}^*$. Therefore, 
\begin{align*}
Y_{A,B}Y_{A,B}^* - Y_{A,B}U_{A,B}^* U_{A,B} Y_{A,B}^* 
&= V_{A,B} |Y_{A,B}|(I - U_{A,B}^* U_{A,B})|Y_{A,B}| V_{A,B}^* \\
&= V_{A,B} (I - U_{A,B}^* U_{A,B})V_{A,B}^*.
\end{align*}
}
\end{remark}

Ando \cite[section 6]{Ando76} also gave a characterization of uniqueness of $A$-Lebesgue decomposition. His proof can be regarded as an application of Proposition \ref{P3.14} and a theorem of von Neumann (see \cite[Theorem 3.6]{FW71}). Hence we have successfully reconstructed Ando's theory as well as Kosaki's description of $[A]B$ \cite[Theorem 6]{Ko84}.

\section{Radon--Nikodym derivatives}

Throughout this section, we will assume that $A$ is non-singular. Since $A \leq (A+B)$, $\mathcal{H}_{A,B} = \mathcal{H}$ and $T_{A,B} = (A+B)^{1/2}$ hold under the assumption. 

\medskip
The first lemma is essentially due to Kosaki \cite[Lemma 3]{Ko84}. 

\begin{lemma}\label{L4.1}
Let $B = B_\mathrm{c} + B_\mathrm{s}$ be the $A$-Lebesgue decomposition of $B$ as in Proposition \ref{P3.6}. Then the quadratic form $q$ on the Hilbert space $\mathcal{H}$ with domain $\mathcal{D}(q) = \mathrm{ran}(A^{1/2})$ defined by
\[
q(A^{1/2}\xi) = \|B_\mathrm{c}^{1/2}\xi\|^{2}, \quad \xi \in \mathcal{H}
\]
becomes a closable positive quadratic form.
\end{lemma}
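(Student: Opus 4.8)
The plan is to reduce the closability of $q$ to a concrete statement about vectors, using the characterization of the domain $\mathcal{D}(q)=\mathrm{ran}(A^{1/2})$ that is already available through Section 3. Recall that a nonnegative quadratic form $q$ with domain $\mathcal{D}(q)$ is closable precisely when, for every sequence $\eta_n\in\mathcal{D}(q)$ with $\eta_n\to 0$ in $\mathcal{H}$ and $q(\eta_n-\eta_m)\to 0$ (a $q$-Cauchy condition), one has $q(\eta_n)\to 0$. Writing $\eta_n=A^{1/2}\xi_n$, the value $q(\eta_n)=\|B_\mathrm{c}^{1/2}\xi_n\|^2$, so the closability statement becomes: whenever $A^{1/2}\xi_n\to 0$ and $(B_\mathrm{c}^{1/2}\xi_n)$ is Cauchy, the limit of $B_\mathrm{c}^{1/2}\xi_n$ must be zero. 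First I would verify that $q$ is well defined, i.e.\ that $A^{1/2}\xi=0$ forces $B_\mathrm{c}^{1/2}\xi=0$; since $A$ is non-singular here, $A^{1/2}\xi=0$ already gives $\xi=0$, so well-definedness is automatic and the form is genuinely a function of $A^{1/2}\xi$ rather than of $\xi$.

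The key step is to express $B_\mathrm{c}^{1/2}\xi_n$ in a way that lets me exploit $A^{1/2}\xi_n\to 0$. Using the non-singularity hypothesis we have $\mathcal{H}_{A,B}=\mathcal{H}$ and $T_{A,B}=(A+B)^{1/2}$, so all the operators $X_{A,B},Y_{A,B},V_{A,B}$ live on $\mathcal{H}$ directly. By Proposition \ref{P3.10} we have $B_\mathrm{c}=B^{1/2}P_{A,B}B^{1/2}$ with $P_{A,B}$ a projection, so $\|B_\mathrm{c}^{1/2}\xi\|^2=\|P_{A,B}B^{1/2}\xi\|^2$. The crucial geometric input is Proposition \ref{P3.14}: the range of $P_{A,B}$ is the closure of $\{\zeta;\,B^{1/2}\zeta\in\mathrm{ran}(A^{1/2})\}$, equivalently $\mathrm{ran}(P_{A,B})=\overline{\mathrm{ran}}((I-Y_{A,B}Y_{A,B}^*)^{1/2})$ by Lemma \ref{L3.13}. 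The plan is to rewrite the form in terms of the operator $(I-Y_{A,B}Y_{A,B}^*)^{1/2}$ and to use Lemma \ref{L3.11}, which identifies $B^{1/2}(I-Y_{A,B}Y_{A,B}^*)B^{1/2}$ with $A^{1/2}(I-X_{A,B}X_{A,B}^*)A^{1/2}$; this is exactly the bridge that converts control over $A^{1/2}\xi_n$ into control over the $B$-side quantities.

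Concretely, I expect to show that along a $q$-Cauchy sequence with $A^{1/2}\xi_n\to 0$, the vectors $B_\mathrm{c}^{1/2}\xi_n$ can be paired against a dense enough family so that the limit vector $\zeta:=\lim_n B_\mathrm{c}^{1/2}\xi_n$ satisfies $\langle\zeta,w\rangle=0$ for all $w$ in that family, forcing $\zeta=0$. The natural test vectors are of the form $B_\mathrm{c}^{1/2}\xi$ with $A^{1/2}\xi\in\mathrm{ran}(A^{1/2})$ dense, and the inner product $\langle B_\mathrm{c}^{1/2}\xi_n,B_\mathrm{c}^{1/2}\xi\rangle=\langle B_\mathrm{c}\xi_n,\xi\rangle$ should be controllable by routing $B_\mathrm{c}=B^{1/2}P_{A,B}B^{1/2}$ through Lemma \ref{L3.11} and the range description of $P_{A,B}$, at which point the factor $A^{1/2}\xi_n\to 0$ kills the pairing. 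The main obstacle will be the bookkeeping at this last step: one must pass from the identity in Lemma \ref{L3.11} (which concerns the \emph{squared} operators) to a statement about the square roots $B_\mathrm{c}^{1/2}$ and $A^{1/2}$ themselves without losing the non-closed range issues, and one must choose the test family carefully so that its closed span contains the putative limit $\zeta$. Establishing that the limit indeed lies in the right subspace — so that $\zeta=0$ follows rather than merely $\zeta\perp(\text{some proper subspace})$ — is the delicate point, and it is where Proposition \ref{P3.14}'s precise identification of $\mathrm{ran}(P_{A,B})$ does the essential work.
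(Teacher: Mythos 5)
Your reduction to the sequential criterion is correct, and so is the basic frame: by Proposition \ref{P3.10}, $q(A^{1/2}\xi)=\|B_\mathrm{c}^{1/2}\xi\|^2=\|P_{A,B}B^{1/2}\xi\|^2$, and the non-singularity of $A$ assumed throughout section 4 does make well-definedness automatic. But the test family you propose does not work as stated, and this is exactly the ``delicate point'' you flag without resolving. Pairing against $B_\mathrm{c}^{1/2}\xi$ gives $\langle B_\mathrm{c}^{1/2}\xi_n,B_\mathrm{c}^{1/2}\xi\rangle=\langle \xi_n,B_\mathrm{c}\xi\rangle$, and the hypothesis $A^{1/2}\xi_n\to 0$ gives no control whatsoever over $\langle\xi_n,\,\cdot\,\rangle$ against an arbitrary vector: to ``kill the pairing'' you would need $B_\mathrm{c}\xi\in\mathrm{ran}(A^{1/2})$, which is not available at this stage --- it is essentially the content of Proposition \ref{P4.2}, which is proved \emph{after} and by means of this very lemma, and even that proposition only yields $B_\mathrm{c}=Z^*Z$ with $Z=D_{A,B}^{1/2}A^{1/2}$, not membership of $\mathrm{ran}(B_\mathrm{c})$ in $\mathrm{ran}(A^{1/2})$. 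Likewise Lemma \ref{L3.11} is not the bridge you need; it enters only indirectly, through Lemma \ref{L3.13} and Proposition \ref{P3.14}. The repair is to test against the set $\mathcal{R}\coloneqq\{w\in\mathcal{H}\,;\,B^{1/2}w\in\mathrm{ran}(A^{1/2})\}$ itself: if $A^{1/2}\xi_n\to 0$ and $q(A^{1/2}(\xi_n-\xi_m))\to 0$, then $P_{A,B}B^{1/2}\xi_n$ is Cauchy with limit $\zeta'$ lying in the \emph{closed} subspace $\mathrm{ran}(P_{A,B})$; for $w\in\mathcal{R}$, writing $B^{1/2}w=A^{1/2}u$ and using $P_{A,B}w=w$ (valid since $\mathcal{R}\subseteq\mathrm{ran}(P_{A,B})$ by Proposition \ref{P3.14}), one gets $\langle P_{A,B}B^{1/2}\xi_n,w\rangle=\langle\xi_n,B^{1/2}w\rangle=\langle A^{1/2}\xi_n,u\rangle\to 0$; since $\mathcal{R}$ is dense in $\mathrm{ran}(P_{A,B})$ by Proposition \ref{P3.14}, $\zeta'=0$, hence $q(A^{1/2}\xi_n)=\|P_{A,B}B^{1/2}\xi_n\|^2\to 0$. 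Note that with this choice of test family your worry about locating the limit evaporates: $\zeta'$ lies in $\mathrm{ran}(P_{A,B})$ automatically, so orthogonality to the dense set $\mathcal{R}$ forces $\zeta'=0$ outright.

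Once repaired, your route is genuinely different from the paper's. The paper's proof is two lines: well-definedness is derived from the $A$-absolute continuity of $B_\mathrm{c}$ (which, unlike your argument, does not need $A$ non-singular), and closability is quoted from the proof of (a) $\Rightarrow$ (b) of \cite[Lemma 3]{Ko84}, whose engine is absolute continuity itself --- the approximants $B_n\nearrow B_\mathrm{c}$ with $B_n\leq\alpha_n A$ furnished by Lemma \ref{L3.5} and Proposition \ref{P3.6}, each of which gives a form bounded by $\alpha_n\|\cdot\|^2$, exhibiting $q$ as an increasing supremum of bounded forms. Your argument instead runs on the projection geometry of section 3 (Propositions \ref{P3.10} and \ref{P3.14}); what it buys is a short, self-contained proof within this paper, nicely consonant with its theme that $[A]B=B^{1/2}P_{A,B}B^{1/2}$ encodes everything. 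What the absolute-continuity mechanism buys is robustness and generality: it uses neither the non-singularity of $A$ nor the explicit identification of $\mathrm{ran}(P_{A,B})$, and it is the version that generalizes directly to Proposition \ref{P4.6}, where $\phi(A,B)\lessdot A$ is established by exactly such approximants.
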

\begin{proof}
By the $A$-absolute continuity of $B_\mathrm{c}$, $A^{1/2}\xi = 0$ implies $B_\mathrm{c}^{1/2}\xi$, that is, the above $q$ is well-defined. Then the proof of (a) $\Longrightarrow$ (b) of \cite[Lemma 3]{Ko84} shows that $q$ is a closable.
\end{proof}

Let $\bar{q}$ be the closure of the above $q$, and there is a unique positive self-adjoint operator $D_{A,B}$ such that $\mathrm{dom}(D_{A,B}^{1/2}) = \mathcal{D}(\bar{q})$, the domain of $\bar{q}$, and $\bar{q}(\xi) = \| D_{A,B}^{1/2}\xi\|^2$ for $\xi \in \mathcal{D}(\bar{q})$. It is a standard fact that $\mathcal{D}(q)$ is a core of $D_{A,B}^{1/2}$.  

\begin{proposition}\label{P4.2}
$Z \coloneqq D_{A,B}^{1/2}A^{1/2}$ is a bounded operator on the Hilbert space $\mathcal{H}$ and $Z^* Z = B_\mathrm{c}$.
\end{proposition}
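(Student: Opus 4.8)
The plan is to exploit the domain inclusion $\mathrm{ran}(A^{1/2}) = \mathcal{D}(q) \subseteq \mathcal{D}(\bar{q}) = \mathrm{dom}(D_{A,B}^{1/2})$ guaranteed by Lemma \ref{L4.1} together with the spectral construction of $D_{A,B}$. This inclusion shows immediately that $Z = D_{A,B}^{1/2} A^{1/2}$ is defined on all of $\mathcal{H}$, since $A^{1/2}\xi$ lands in $\mathrm{dom}(D_{A,B}^{1/2})$ for every $\xi \in \mathcal{H}$. Thus $Z$ is an everywhere-defined linear operator, and the only remaining issue for boundedness is a uniform norm estimate.

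First I would compute $\|Z\xi\|^2$ directly. For $\xi \in \mathcal{H}$ the vector $A^{1/2}\xi$ belongs to $\mathcal{D}(q)$, on which the form closure $\bar{q}$ restricts to $q$; hence
\[
\|Z\xi\|^2 = \|D_{A,B}^{1/2} A^{1/2}\xi\|^2 = \bar{q}(A^{1/2}\xi) = q(A^{1/2}\xi) = \|B_\mathrm{c}^{1/2}\xi\|^2.
\]
This single chain of equalities does all the work: it shows $\|Z\xi\| = \|B_\mathrm{c}^{1/2}\xi\| \leq \|B_\mathrm{c}^{1/2}\|\,\|\xi\|$, so that $Z$ is bounded, and simultaneously it records the identity $\|Z\xi\|^2 = (B_\mathrm{c}\,\xi,\xi)$ that will yield the second assertion.

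Once $Z$ is known to be bounded, $Z^*$ is the ordinary Hilbert-space adjoint and $Z^* Z$ is a bounded positive operator. Since $(Z^* Z\,\xi,\xi) = \|Z\xi\|^2 = (B_\mathrm{c}\,\xi,\xi)$ for every $\xi \in \mathcal{H}$, and both $Z^* Z$ and $B_\mathrm{c}$ are bounded and self-adjoint, polarization forces $Z^* Z = B_\mathrm{c}$, completing the proof.

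The argument is short, and the only point requiring care is the justification that $\bar{q}$ agrees with $q$ on the original form domain $\mathcal{D}(q)$—a standard but essential property of the closure of a closable form—together with the bookkeeping that the inclusion $\mathrm{ran}(A^{1/2}) \subseteq \mathrm{dom}(D_{A,B}^{1/2})$ really does make $Z$ everywhere defined rather than merely densely defined. I do not anticipate a genuine obstacle: the substance of the result is already packed into Lemma \ref{L4.1} and the spectral construction of $D_{A,B}$, and Proposition \ref{P4.2} is essentially the statement that those two constructions are mutually consistent.
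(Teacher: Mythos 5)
Your proof is correct, and it deviates from the paper's in exactly one step: how boundedness of $Z$ is obtained. The paper first notes that $Z = D_{A,B}^{1/2}A^{1/2}$ is everywhere defined (via $\mathrm{ran}(A^{1/2}) \subseteq \mathrm{dom}(D_{A,B}^{1/2})$) and closed, being the composition of the closed operator $D_{A,B}^{1/2}$ with the bounded operator $A^{1/2}$, and then invokes the closed graph theorem; only afterwards does it record $(Z^*Z\xi,\xi) = \|Z\xi\|^2 = \|B_\mathrm{c}^{1/2}\xi\|^2$ and polarize. You instead read the bound off the norm identity itself,
\[
\|Z\xi\|^2 = \bar{q}(A^{1/2}\xi) = q(A^{1/2}\xi) = \|B_\mathrm{c}^{1/2}\xi\|^2 \leq \|B_\mathrm{c}^{1/2}\|^2\,\|\xi\|^2,
\]
an identity that is needed anyway for the second assertion; this makes the closed graph theorem (and the closedness of $Z$) superfluous and is the more economical route. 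The two facts you flag as requiring care --- that the closure $\bar{q}$ agrees with $q$ on the original domain $\mathcal{D}(q)$, and that $\bar{q}(\eta) = \|D_{A,B}^{1/2}\eta\|^2$ on $\mathcal{D}(\bar{q}) = \mathrm{dom}(D_{A,B}^{1/2})$ --- are precisely the representation-theorem facts the paper sets up in the paragraph preceding the proposition, and the paper's own chain $\|Z\xi\|^2 = \|B_\mathrm{c}^{1/2}\xi\|^2$ tacitly uses them as well, so you are not assuming anything beyond what the paper does. The concluding polarization argument is identical in both proofs. What the paper's detour through closedness would buy is a template that survives in situations where no uniform estimate on the form is available in advance; here, since $q(A^{1/2}\xi) = \|B_\mathrm{c}^{1/2}\xi\|^2$ is manifestly dominated by $\|B_\mathrm{c}\|\,\|\xi\|^2$, that extra machinery is strictly redundant and your shortcut is sound.
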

\begin{proof}
Since $\mathrm{ran}(A^{1/2}) \subseteq \mathcal{D}(D_{A,B}^{1/2})$, $Z$ is defined on the whole $\mathcal{H}$ and also closed thanks to the closed graph theorem. Moreover, $(Z^* Z\xi,\xi) = \|Z\xi\|^2 = \|B_\mathrm{c}^{1/2}\xi\|^2 = (B_\mathrm{c}\xi,\xi)$ for all $\xi \in \mathcal{H}$, and thus $Z^* Z = B_\mathrm{c}$.
\end{proof}

The above proposition may be understood as an extension of the Douglas decomposition theorem (see e.g., \cite[Theorem 2.1]{FW71}), and the operator $D_{A,B}$ may be understood to play a role of Radon-Nikodym derivative.

\medskip
We define $h(x) = \frac{1-x}{x}\mathbf{1}_{(0,1]}(x)$and $
h_n(x) = \frac{1-x}{x}\mathbf{1}_{[1/n,1]}(x)$ ($n=1,2,\dots$) for all $x \in [0,1]$.

\begin{lemma}\label{L4.3}
\[
q(A^{1/2}\xi) 
= \|B_{c}^{1/2}\xi\|^2
= \int_0^1 h(x)\,\| E_{XX^*}(dx)A^{1/2}\xi\|^2,
\]
where $E_{XX^*}$ is the spectral projection of $X_{A,B}X_{A,B}^*$. 
\end{lemma}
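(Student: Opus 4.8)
The plan is to express the quadratic form $q$ through the spectral measure of $X_{A,B}X_{A,B}^*$ by chasing the definition of $B_\mathrm{c}$ back to the functional-calculus description established in Section 3. I would write $X=X_{A,B}$, $Y=Y_{A,B}$, etc., and recall that under the standing assumption ($A$ non-singular, so $\mathcal{H}_{A,B}=\mathcal{H}$ and $T_{A,B}=(A+B)^{1/2}$) Proposition \ref{P3.10} together with Lemma \ref{L3.11} gives $B_\mathrm{c}=B^{1/2}P_{A,B}B^{1/2}$, and more usefully that $A:B=A^{1/2}(I-XX^*)A^{1/2}$. The key is to obtain a formula for $\|B_\mathrm{c}^{1/2}\xi\|^2=(B_\mathrm{c}\xi,\xi)$ in which only $A^{1/2}\xi$ and a function of $XX^*$ appear, so that the right-hand integral can be read off from the spectral theorem for $XX^*$.

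The main computation is therefore to show $(B_\mathrm{c}\xi,\xi)=(A^{1/2}\xi,\,h(XX^*)\,A^{1/2}\xi)$ for all $\xi$, where $h(x)=\frac{1-x}{x}\mathbf{1}_{(0,1]}(x)$; granting this, the integral representation is immediate by the spectral theorem, since $\int_0^1 h(x)\,\|E_{XX^*}(dx)A^{1/2}\xi\|^2=(A^{1/2}\xi,h(XX^*)A^{1/2}\xi)$. First I would pass, via $\Gamma_{A,B}$, to the level of $\mathcal{H}_{A,B}$, where $B_\mathrm{c}=\Gamma_{A,B}((S_{A,B})_\mathrm{c})$ with $(S_{A,B})_\mathrm{c}=S_{A,B}E_{R_{A,B}}((0,1])$ and $R_{A,B}+S_{A,B}=I$ (Lemma \ref{L2.1}). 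Writing $S_{A,B}=I-R_{A,B}$ and using $E_{R}((0,1])=\mathbf{1}_{(0,\infty)}(R_{A,B})$, the function applied to $R_{A,B}$ is precisely $y\mapsto(1-y)\mathbf{1}_{(0,\infty)}(y)$ evaluated at $R_{A,B}$, i.e. $R_{A,B}$ replaced by $x$ gives $(1-x)\mathbf{1}_{(0,1]}(x)$. Then I would transport this back to $A^{1/2}$ using $X_{A,B}T_{A,B}=A^{1/2}$ and Remark \ref{R2.8}, which relates functions of $R_{A,B}$ conjugated by $U_{A,B}$ to functions of $X_{A,B}X_{A,B}^*$; the presence of the factor $1/x$ in $h$ is exactly the Jacobian coming from rewriting $R_{A,B}=|X_{A,B}|^2$ in terms of $A^{1/2}=X_{A,B}T_{A,B}$, since conjugating by $T_{A,B}$ (rather than by $X_{A,B}$) absorbs one power of $R_{A,B}$.

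The step I expect to be the genuine obstacle is the careful bookkeeping on the boundary value $x=0$ (equivalently $y=1$, the singular direction) and the division by $x$ in $h$. On $\ker(R_{A,B})=\mathrm{ran}(E_{R_{A,B}}(\{0\}))$ the projection $E_{R_{A,B}}((0,1])$ kills the contribution, which matches $\mathbf{1}_{(0,1]}(x)$ vanishing at $x=0$, so the singular part is correctly discarded; but to legitimately write the $1/x$ factor one must confine attention to $\mathrm{ran}(E_{R_{A,B}}([\varepsilon,1]))$ and pass to the limit, exactly as in the proof of Lemma \ref{L3.5}. I would handle this by first establishing the identity with $h_n$ in place of $h$ on the spectral subspace $E_{R_{A,B}}([1/n,1])$, where everything is bounded, and then invoke monotone convergence $h_n\nearrow h$ (together with $A^{1/2}\xi=X_{A,B}T_{A,B}\xi$ being a fixed vector) to obtain the stated formula; this is consistent with, and indeed motivates, the subsequent use of $h_n$ in the paper. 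The only other point requiring care is verifying that the measure $\Lambda\mapsto\|E_{XX^*}(\Lambda)A^{1/2}\xi\|^2$ assigns no mass that $h$ fails to integrate, which follows because $h$ is bounded on $[\varepsilon,1]$ and the $x=0$ atom contributes nothing after applying $\mathbf{1}_{(0,1]}$.
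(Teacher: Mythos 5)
Your proposal is correct and follows essentially the same route as the paper's own proof: both reduce to $B_\mathrm{c}=\Gamma_{A,B}(R_{A,B}\,h(R_{A,B}))$ (with the $1/x$ factor in $h$ arising exactly as you describe from $A^{1/2}=X_{A,B}T_{A,B}$), pass to the bounded truncations $h_n$, convert $U_{A,B}\,h_n(R_{A,B})\,U_{A,B}^*$ into $h_n(X_{A,B}X_{A,B}^*)$ via Remark \ref{R2.8}, and conclude by the monotone convergence theorem. The only cosmetic difference is that the paper applies $h_n$ globally---it is already bounded on all of $[0,1]$---rather than restricting to the spectral subspace $E_{R_{A,B}}([1/n,1])$ as you suggest.
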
   
\begin{proof}
Since $h_n(x)$ is a bounded Borel function on $[0,1]$ and $x\,h_n(x) \nearrow \phi_\gtrdot(x,y) = y\mathbf{1}_{(0,\infty)}(x)$ on $\{ (x,y) \in [0,\infty)^2\,;\, x + y = 1\}$ as $n\to\infty$, we have $R_{A,B}\,h_n(R_{A,B}) \nearrow \phi_\gtrdot(R_{A,B},S_{A,B})$, and hence, $\Gamma_{A,B}(R_{A,B}\, h_n(R_{A,B})) \nearrow \phi_\gtrdot(A,B) = B_\mathrm{c}$ as $n\to\infty$. We have, for any $\xi \in \mathcal{H}$, 
\begin{align*}
q(A^{1/2}\xi) 
= (B_\mathrm{c}\xi,\xi)_{\mathcal{H}}
&= 
\lim_{n \to \infty}(\Gamma_{A,B}(R_{A,B}\,h_n(R_{A,B}))\xi,\xi) \\
&= 
\lim_{n \to \infty}(\Gamma_{A,B}(R_{A,B}^{1/2}\,h_n(R_{A,B})R_{A,B}^{1/2})\xi,\xi) \\
&= 
\lim_{n \to \infty} (A^{1/2}(U_{A,B} h_n(R_{A,B}) U_{A,B}^*)A^{1/2}\xi,\xi) \\
&= 
\lim_{n \to \infty} (A^{1/2}(h_n(X_{A,B}X_{A,B}^*))A^{1/2}\xi,\xi) \\
&= 
\lim_{n \to \infty} 
\int_0^1 h_n(x)\,\| E_{XX^*}(dx)A^{1/2}\xi\|^2 \\
&= 
\int_0^1 h(x)\,\| E_{XX^*}(dx)A^{1/2}\xi\|^2
\end{align*}
by the monotone convergence theorem.
\end{proof}

The above proof also shows the next corollary. The last part uses \cite[Lemma 2.5]{HUW22}. 

\begin{corollary}\label{C4.4}
The sequence $h_n(X_{A,B}X_{A,B}^{*})$ is monotone increasing and converges to an element of the extended positive part $\widehat{B(\mathcal{H})}_+$ denoted by $h(X_{A,B}X_{A,B}^*)$. Then we have  
\[
B_\mathrm{c} = Z^* Z = \lim_{n \to \infty}A^{1/2}h_n(X_{A,B}X_{A,B}^*)A^{1/2} = A^{1/2}h(X_{A,B}X_{A,B}^*)A^{1/2},
\]
where the second equality is in the strong operator topology and the third equality is justified as elements of $\widehat{B(\mathcal{H})}_+$. 
\end{corollary}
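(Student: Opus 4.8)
The plan is to read off everything except the final equality directly from the computation already carried out in the proof of Lemma \ref{L4.3}, and to handle the last identification in $\widehat{B(\mathcal{H})}_+$ separately via \cite[Lemma 2.5]{HUW22}. I would begin with the monotonicity. Since $h_n(x) = \frac{1-x}{x}\mathbf{1}_{[1/n,1]}(x)$ increases pointwise to $h(x) = \frac{1-x}{x}\mathbf{1}_{(0,1]}(x)$ on $[0,1]$ as $n \to \infty$—the only change between consecutive indices occurs on $[1/(n+1),1/n)$, where $h_n$ vanishes while $h_{n+1}$ is nonnegative—we have $h_n \leq h_{n+1}$ pointwise. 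Because $X_{A,B}X_{A,B}^*$ is a positive contraction, its spectrum lies in $[0,1]$, each $h_n$ is bounded there, and the order-preserving property of the Borel functional calculus yields $h_n(X_{A,B}X_{A,B}^*) \leq h_{n+1}(X_{A,B}X_{A,B}^*)$. An increasing sequence of positive operators always admits a supremum in the extended positive part $\widehat{B(\mathcal{H})}_+$, and this supremum is what I would take as the definition of $h(X_{A,B}X_{A,B}^*)$; it is a bounded operator exactly when $h$ is essentially bounded for the spectral measure of $X_{A,B}X_{A,B}^*$, and in general it need not be.

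For the first two equalities I would argue as follows. The identity $B_\mathrm{c} = Z^* Z$ is precisely Proposition \ref{P4.2}. For the strong-operator convergence, the chain of identities in the proof of Lemma \ref{L4.3} already shows, via Remark \ref{R2.8}, that $\Gamma_{A,B}(R_{A,B}h_n(R_{A,B})) = A^{1/2}h_n(X_{A,B}X_{A,B}^*)A^{1/2}$ and that the left-hand side increases to $\phi_\gtrdot(A,B) = B_\mathrm{c}$. An increasing sequence of positive operators whose associated quadratic forms converge to a bounded quadratic form converges in the strong operator topology; hence $A^{1/2}h_n(X_{A,B}X_{A,B}^*)A^{1/2} \to B_\mathrm{c}$ strongly.

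Finally, for the third equality I would invoke \cite[Lemma 2.5]{HUW22}, which ensures that the sandwiching operation $m \mapsto A^{1/2}\,m\,A^{1/2}$ is compatible with increasing limits taken in $\widehat{B(\mathcal{H})}_+$. Applying it to $h_n(X_{A,B}X_{A,B}^*) \nearrow h(X_{A,B}X_{A,B}^*)$ gives $A^{1/2}h_n(X_{A,B}X_{A,B}^*)A^{1/2} \nearrow A^{1/2}h(X_{A,B}X_{A,B}^*)A^{1/2}$ as elements of $\widehat{B(\mathcal{H})}_+$; since the same sequence has bounded strong limit $B_\mathrm{c}$, and a bounded strong limit of an increasing sequence coincides with its supremum in $\widehat{B(\mathcal{H})}_+$, the two limits agree and $A^{1/2}h(X_{A,B}X_{A,B}^*)A^{1/2} = B_\mathrm{c}$. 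I expect the main obstacle to be purely the bookkeeping in the extended positive part: one must keep in mind that $h(X_{A,B}X_{A,B}^*)$ is a priori only an element of $\widehat{B(\mathcal{H})}_+$ and may be unbounded, so the equality cannot be read as an identity of bounded operators at the outset—it is exactly the normality statement of \cite[Lemma 2.5]{HUW22} that lets the symmetric sandwiching $A^{1/2}(\cdot)A^{1/2}$ collapse this possibly unbounded limit back onto the bounded operator $B_\mathrm{c}$.
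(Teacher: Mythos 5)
Your proposal is correct and follows essentially the same route as the paper: the paper gives no separate argument for Corollary~\ref{C4.4} beyond noting that the computation in the proof of Lemma~\ref{L4.3} (via Remark~\ref{R2.8} and the monotone limit $\Gamma_{A,B}(R_{A,B}h_n(R_{A,B})) \nearrow B_\mathrm{c}$, together with Proposition~\ref{P4.2}) already yields it, with the final identification in $\widehat{B(\mathcal{H})}_+$ handled by \cite[Lemma 2.5]{HUW22}, exactly as you do. Your additional care about $h(X_{A,B}X_{A,B}^*)$ being a priori unbounded is a faithful elaboration of the paper's intent, not a deviation.
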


It is natural to regard $h(X_{A,B}X_{A,B}^*) \in \widehat{B(\mathcal{H})}_+$ as a kind of Radon--Nikodym derivative for $B_\mathrm{c}$ with respect to $A$. We set the lower semi-continuous quadratic form $p$ by
\[
p(\xi) = \int_0^1 h(x)\,\| E_{XX^*}(dx)\xi\|^2
\]
with domain $\{\xi \in \mathcal{H}\:;\: \int_{0}^{1}h(x)\left\| E_{XX^*}(dx)\xi\right\|^2 < \infty\}$. Then, $p$ agrees with $\overline{q}$ on the $\mathrm{ran}(A^{1/2})$. Moreover, we have
\[
\mathcal{D}(\overline{q}) \subseteq \mathcal{D}(p)
\quad \mathrm{and} \quad 
p(\xi) \leq \overline{q}(\xi).
\]
In fact, for every $\xi \in \mathcal{D}(\overline{q}) = \mathcal{D}(D_{A,B}^{1/2})$ there exists a sequence $\{\xi_n\}$ in $\mathrm{ran}(A^{1/2})$ such that $\xi_n \to \xi$ and $D_{A,B}^{1/2}\xi_n \to D_{A,B}^{1/2}\xi$. Then we have
\[
p(\xi) 
\leq \liminf_{n \to \infty} p(\xi_n) 
= \liminf_{n \to \infty} q(\xi_n)
= \liminf_{n \to \infty} \|D_{A,B}^{1/2}\xi_n\|^2
= \|D_{A,B}^{1/2}\xi\|^2.
\]

\begin{question}
When does $\overline{q} = p$ hold ?
\end{question}

\bigskip
We have known that $[A]B$ is a special example of PW-functional calculus. Thus, we generalize the above discussion to a general operator given by PW-functional calculus. The consequence can be regarded as an attempt to write a PW-functional calculus operator to be of the form $A^{1/2}(\cdots)A^{1/2}$ like Kubo--Ando's form of operator means/connections. 

\begin{proposition}\label{P4.6}
Let $\phi(x,y)$ be a non-negative homogeneous Borel function on $[0,\infty)^2$ that is bounded on any compact subset. Assume that $\phi(x,y) = 0$ whenever $x=0$. Then, the PW-functional calculus $\phi(A,B)$ is $A$-absolutely continuous and of the form
\[
\phi(A,B) = Z_\phi^* Z_\phi \quad \text{with} \quad Z_\phi = T_\phi^{1/2}A^{1/2}, 
\]
where $T_\phi$ is a unique positive self-adjoint operator corresponding to the  closure of the closable positive quadratic form  
\[
q_\phi(A^{1/2}\xi) = \|\phi(A,B)^{1/2}\xi\|^2, \quad \xi \in \mathcal{H}
\]
with domain $\mathrm{ran}(A^{1/2})$.
\end{proposition}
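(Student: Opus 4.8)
The plan is to generalize, for an arbitrary such $\phi$, the chain of arguments that treated the special case $\phi_\gtrdot$ in Lemmas \ref{L4.1} and \ref{L4.3}, Proposition \ref{P4.2}, and Corollary \ref{C4.4}. Throughout I write $T = T_{A,B} = (A+B)^{1/2}$, $R = R_{A,B}$, $S = S_{A,B}$, $X = X_{A,B}$, $U = U_{A,B}$, recalling that non-singularity of $A$ makes $\mathcal{H}_{A,B} = \mathcal{H}$. First I would reduce the two-variable calculus to a one-variable functional calculus: by the general construction $\phi(A,B) = \Gamma_{A,B}(\phi(R,S))$, and since $R + S = I$ one has $\phi(R,S) = g(R)$ with $g(x) := \phi(x,1-x)$ on $[0,1]$. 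Here $g$ is bounded, because $\phi$ is bounded on the compact segment $\{x+y=1\}$, and $g(0) = \phi(0,1) = 0$ by the standing hypothesis $\phi(0,y)=0$.

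The heart of the argument is to rewrite $\phi(A,B)$ in a form $A^{1/2}(\cdots)A^{1/2}$. Set $k(x) := g(x)/x$ for $x \in (0,1]$ and $k(0) := 0$, a non-negative Borel function on $[0,1]$ that is in general unbounded near $0$, and its bounded truncations $k_n(x) := k(x)\mathbf{1}_{[1/n,1]}(x)$. Then $x\,k_n(x) = g(x)\mathbf{1}_{[1/n,1]}(x) \nearrow g(x)$ on $[0,1]$, so $R\,k_n(R) \nearrow g(R) = \phi(R,S)$, and the order-continuity of $\Gamma_{A,B}$ from Remarks \ref{R2.3}(2) gives $\Gamma_{A,B}(R\,k_n(R)) \nearrow \phi(A,B)$. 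Writing $R\,k_n(R) = R^{1/2}k_n(R)R^{1/2}$ and using $R^{1/2}T = U^*A^{1/2}$ (which follows from $R^{1/2} = |X| = U^*X$ and $XT = A^{1/2}$) together with Remark \ref{R2.8}, I obtain
\[
\Gamma_{A,B}(R\,k_n(R)) = A^{1/2}\,U k_n(R) U^*\,A^{1/2} = A^{1/2}k_n(XX^*)A^{1/2}.
\]
Since $A^{1/2}k_n(XX^*)A^{1/2} \leq \|k_n\|_\infty A$ and these increase to $\phi(A,B)$ strongly, this simultaneously proves $\phi(A,B) \lessdot A$ and, by the monotone convergence theorem, identifies the form as
\[
q_\phi(A^{1/2}\xi) = (\phi(A,B)\xi,\xi) = \int_0^1 k(x)\,\|E_{XX^*}(dx)A^{1/2}\xi\|^2, \quad \xi\in\mathcal{H},
\]
where $E_{XX^*}$ is the spectral measure of $XX^*$.

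Well-definedness of $q_\phi$ is automatic because $A$ non-singular makes $A^{1/2}$ injective, so $A^{1/2}\xi$ determines $\xi$; positivity is clear. For closability I would recognize the right-hand side above as the restriction to $\mathrm{ran}(A^{1/2})$ of the closed quadratic form $p_\phi(\eta) = \int_0^1 k(x)\|E_{XX^*}(dx)\eta\|^2$ associated with the (possibly unbounded) positive self-adjoint operator $k(XX^*)$; the inclusion $\mathrm{ran}(A^{1/2}) \subseteq \mathcal{D}(p_\phi)$ holds precisely because $q_\phi(A^{1/2}\xi) = \|\phi(A,B)^{1/2}\xi\|^2 < \infty$. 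A restriction of a closed non-negative form is closable, so $q_\phi$ is closable; let $T_\phi$ denote the positive self-adjoint operator associated with its closure $\overline{q_\phi}$.

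The operator identity then follows exactly as in Proposition \ref{P4.2}. Since $\mathrm{ran}(A^{1/2}) = \mathcal{D}(q_\phi) \subseteq \mathcal{D}(\overline{q_\phi}) = \mathcal{D}(T_\phi^{1/2})$, the operator $Z_\phi = T_\phi^{1/2}A^{1/2}$ is everywhere defined; being a product of the closed operator $T_\phi^{1/2}$ with the bounded $A^{1/2}$ it is closed, and an everywhere-defined closed operator is bounded by the closed graph theorem. Finally, for every $\xi\in\mathcal{H}$,
\[
(Z_\phi^* Z_\phi\,\xi,\xi) = \|T_\phi^{1/2}A^{1/2}\xi\|^2 = \overline{q_\phi}(A^{1/2}\xi) = q_\phi(A^{1/2}\xi) = (\phi(A,B)\xi,\xi),
\]
whence $Z_\phi^* Z_\phi = \phi(A,B)$. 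I expect the only genuinely delicate point to be the handling of the unbounded function $k$ near $x=0$: making precise the self-adjoint operator $k(XX^*)$ and its closed form, verifying that the atom of $E_{XX^*}$ at $0$ (the $A$-singular direction) contributes nothing since $k_n(0)=0$, and carrying out the standard but technical check that the restricted form is closable. Everything else is a routine transcription of the $\phi_\gtrdot$ case already established.
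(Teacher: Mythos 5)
Your proof is correct, and its core is exactly the paper's: the paper's proof of Proposition \ref{P4.6} uses the same truncations $(h_\phi)_n(x)=\frac{\phi(x,1-x)}{x}\mathbf{1}_{[1/n,1]}(x)$ (your $k_n$), the same monotone limit $R\,(h_\phi)_n(R)\nearrow\phi(R,S)$ (valid because $\phi(0,y)=0$ forces $g(0)=0$), and the same bound by $\alpha_n A$ with $\alpha_n=n\sup_x\phi(x,1-x)$ to conclude $\phi(A,B)\lessdot A$. Where you genuinely diverge is in what the paper dispatches with the single sentence ``we can apply the above discussion to $\phi(A,B)$ instead of $B_\mathrm{c}$'': there, closability of $q_\phi$ comes from Lemma \ref{L4.1}, i.e., by invoking the proof of (a) $\Rightarrow$ (b) of Kosaki's \cite[Lemma 3]{Ko84}, and the rest is Proposition \ref{P4.2} verbatim. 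You instead prove closability directly: using Remark \ref{R2.8} and monotone convergence you identify $q_\phi(A^{1/2}\xi)=\int_0^1 k(x)\,\|E_{XX^*}(dx)A^{1/2}\xi\|^2$, so that $q_\phi$ is the restriction to $\mathrm{ran}(A^{1/2})$ of the closed form of the positive self-adjoint operator $k(X_{A,B}X_{A,B}^*)$ (ordinary spectral calculus suffices, since $k$ is everywhere finite with $k(0)=0$), and a restriction of a closed non-negative form is closable. This is sound and is precisely the relation $q=p|_{\mathrm{ran}(A^{1/2})}$ that the paper itself records after Corollary \ref{C4.4}; importantly, you only use the inclusion of $p_\phi$ as a closed extension of $q_\phi$, not the equality $\overline{q_\phi}=p_\phi$, which the paper deliberately leaves as an open question, so there is no hidden gap. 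What your route buys is self-containedness (no appeal to \cite{Ko84}, and closability comes with the explicit spectral representation of the form already in hand); what the paper's route buys is brevity and uniformity with the $B_\mathrm{c}$ case. Two harmless cosmetic points: Remarks \ref{R2.3}(2) as stated concerns $\Gamma_{A,B}^{-1}$, while the monotone continuity of $\Gamma_{A,B}$ you need is the trivial direction $(\tilde C_n T\xi,T\xi)\nearrow(\tilde C T\xi,T\xi)$; and your identity $R^{1/2}T=U^*A^{1/2}$ implicitly uses $U^*U R^{1/2}=R^{1/2}$, which holds because $U^*U$ is the support projection of $|X_{A,B}|$.
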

\begin{proof}
Let $(h_\phi)_n(x) = \frac{\phi(x,1-x)}{x}\mathbf{1}_{[1/n, 1]}(x)$ on $x \in [0, 1]$. Then $x\,(h_\phi)_n(x) \nearrow \phi(x,1-x)\mathbf{1}_{(0,1]}(x) = \phi(x,y)$ by assumption. We have $R_{A,B}\,(h_\phi)_n(R_{A,B}) \nearrow \phi(R_{A,B},S_{A,B})$. Moreover, $R_{A,B}\,(h_\phi)_n(R_{A,B})\leq \alpha_n R_{A,B}$ with $\alpha_n \coloneqq n\sup\{\phi(x,1-x)\,;\,x\in [0,1]\} < \infty$ 
implies that $\Gamma_{A,B}(R_{A,B}\,(h_\phi)_n(R_{A,B})) \leq \alpha_n A$ and 
\[
\Gamma_{A,B}(R_{A,B}\,(h_\phi)_n(R_{A,B})) \nearrow \Gamma_{A,B}(\phi(R_{A,B},S_{A,B})) = \phi(A, B).
\] 
Hence $\phi(A,B) \lessdot A$. Then we can apply the above discussion to $\phi(A,B)$ instead of $B_\mathrm{c}$.
\end{proof}

With $(h_\phi)_n(x) = \frac{\phi(x,1-x)}{x}\mathbf{1}_{[1/n, 1]}(x) \nearrow \frac{\phi(x,1-x)}{x}\,\mathbf{1}_{(0,1]}(x) = h_\phi(x)$ for all $x \in [0, 1]$, the above proof shows that the sequence $(h_\phi)_n(X_{A,B}X_{A,B}^*)$ is monotone increasing and we have
\[
\phi(A,B) = \lim_{n\to\infty} A^{1/2}\,(h_\phi)_n(X_{A,B}X_{A,B}^*)\,A^{1/2} = A^{1/2}\, h_\phi(X_{A,B}X_{A,B}^*)\,A^{1/2}, 
\]
where the first equation is in the strong operator topology and the second equation is understood as elements of the extended positive part $\widehat{B(\mathcal{H})}_+$.

In closing of this section, we should mention that the consideration here is related to Kosaki's works \cite{Ko84},\cite[section 4]{Ko18} and \cite{Ko}, where more detailed analyses from the viewpoint of unbounded operators are carried out. We regard Kosaki's unbounded operator approach to Lebesgue decomposition as a kind of attempt to develop Radon--Nikodym theorem for positive operators.  

\appendix
\section{A detailed account of Remark \ref{R2.5}}

We will give a detailed account about Remark \ref{R2.5}. Let us consider $\phi_\alpha(x,y)$ with $\alpha>1$ and $\psi(x,y)$ defined by 
\[
\phi_\alpha(x,y) 
= 
\begin{cases}
0 & (x=y=0), \\
\infty & (x>0=y), \\
(x/y)^\alpha y & (\text{otherwise}), 
\end{cases}
\qquad 
\psi(x,y)
=
\begin{cases}
0 & (x=y=0), \\
\infty & (x>0=y), \\
x\log(x/y) & (\text{otherwise}). 
\end{cases}
\]
Then it is easy to confirm that 
\begin{equation}\label{EqA1}
\phi_\alpha(x_1 x_2, y_1 y_2) = \phi_\alpha(x_1,y_1)\, \phi_\alpha(x_2,y_2), \qquad 
\psi(x_1 x_2, y_1 y_2) = \psi(x_1,y_1)x_2 + x_1\psi(x_2,y_2)
\end{equation}
hold for all $(x_1,y_1), (x_2,y_2) \in [0,\infty)^2$. 

For any $(A_i,B_i) \in B(\mathcal{H}_i)_+\times B(\mathcal{H}_i)_+$, $i=1,2$, we have $\phi_\alpha(A_1\otimes A_2, B_1\otimes B_2), \psi(A_1\otimes A_2, B_1\otimes B_2) \in \widehat{B(\mathcal{H}_1\otimes\mathcal{H}_2)}_\mathrm{lb}$ by PW-functional calculus developed in \cite{HUW22}. 

\begin{proposition}\label{PA1} For any $\rho_i \in B(\mathcal{H}_i)_*^+$, $i=1,2$, we have 
\begin{equation}\label{EqA2}
\begin{aligned}
\phi_\alpha(A_1\otimes A_2, B_1\otimes B_2)(\rho_1\otimes\rho_2) 
&=
\phi_\alpha(A_1,B_1)(\rho_1)\,\phi_\alpha(A_2,B_2)(\rho_2), \\
\psi(A_1\otimes A_2, B_1\otimes B_2)(\rho_1\otimes\rho_2) 
&=
\psi(A_1,B_1)(\rho_1)\,\rho_2(A_2) + \rho_1(A_1)\,\psi(A_2,B_2)(\rho_2), 
\end{aligned}
\end{equation}
where $\rho_1\otimes\rho_2$ is an element of $B(\mathcal{H}_1\otimes\mathcal{H}_2)_*^+$ via $B(\mathcal{H}_1\otimes\mathcal{H}_2) \cong B(\mathcal{H}_1)\otimes B(\mathcal{H}_2)$ as von Neumann algebra tensor product. 
\end{proposition}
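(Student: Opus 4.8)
The plan is to rerun the spectral computation from the proof of Theorem~\ref{T2.4}, but to pair with the normal functionals \emph{before} passing to the limit, so that no tensor product of unbounded (extended) operators ever has to be defined. Morally, \eqref{EqA2} is the evaluation against $\rho_1\otimes\rho_2$ of the operator identities $\phi_\alpha(A_1\otimes A_2,B_1\otimes B_2)=\phi_\alpha(A_1,B_1)\otimes\phi_\alpha(A_2,B_2)$ and $\psi(A_1\otimes A_2,B_1\otimes B_2)=\psi(A_1,B_1)\otimes A_2+A_1\otimes\psi(A_2,B_2)$ anticipated in Remark~\ref{R2.5}; since $\phi_\alpha$ and $\psi$ take the value $\infty$ on $\{x>0=y\}$, these live in $\widehat{B(\mathcal{H}_1\otimes\mathcal{H}_2)}_{\mathrm{lb}}$ and must be read through the monotone-limit description of the pairing $\widehat{T}(\rho)$ from \cite{HUW22} (as already used in Corollary~\ref{C4.4}). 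Throughout I write $T_i=T_{A_i,B_i}$, $R_i=R_{A_i,B_i}$, $S_i=S_{A_i,B_i}$, $\Gamma_i=\Gamma_{A_i,B_i}=T_i^*(\,\cdot\,)T_i$, and let $E_i$ be the spectral measure of $R_i$, so that $R_i=\int_0^1 x\,E_i(dx)$ as in the proof of Theorem~\ref{T2.4}.

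For $\phi_\alpha$ set $T=T_1\otimes T_2$, $R=R_1\otimes R_2$, $S=S_1\otimes S_2$, and $f_\alpha(x)\coloneqq\phi_\alpha(x,1-x)=x^\alpha(1-x)^{1-\alpha}$. The (extended) operator homogeneity used in Theorem~\ref{T2.4} gives $\phi_\alpha(A_1\otimes A_2,B_1\otimes B_2)=T^*\phi_\alpha(R,S)T$. The first identity in \eqref{EqA1} rewrites the integrand of the product spectral representation as $\phi_\alpha(x_1x_2,(1-x_1)(1-x_2))=f_\alpha(x_1)f_\alpha(x_2)$, valid pointwise with the convention $\infty\cdot0=0$. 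I would then approximate this bivariate product by $F_n(x_1)F_n(x_2)$ with the bounded functions $F_n\coloneqq\min\{f_\alpha,n\}\nearrow f_\alpha$ (capping at height $n$, so that the singular value at $x=1$ is genuinely approximated by $n\to\infty$, not discarded). By monotone convergence of the spectral integral and \cite[Theorem 8.2]{St59}, $\phi_\alpha(R,S)=\sup_n F_n(R_1)\otimes F_n(R_2)$ in $\widehat{B}_+$, with each $F_n(R_1)\otimes F_n(R_2)$ bounded. Applying $T^*(\,\cdot\,)T$ and using $T^*(M_1\otimes M_2)T=(T_1^*M_1T_1)\otimes(T_2^*M_2T_2)$ turns this into $\sup_n\Gamma_1(F_n(R_1))\otimes\Gamma_2(F_n(R_2))$; pairing with $\rho_1\otimes\rho_2$ factors each term as $\rho_1(\Gamma_1(F_n(R_1)))\,\rho_2(\Gamma_2(F_n(R_2)))$. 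Since $\Gamma_i(F_n(R_i))\nearrow\phi_\alpha(A_i,B_i)$ (again Remark~\ref{R2.3}(2)), letting $n\to\infty$ and using $\sup_n a_nb_n=(\sup_n a_n)(\sup_n b_n)$ for nonnegative nondecreasing sequences (once more with $\infty\cdot0=0$) yields the first line of \eqref{EqA2}.

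The case of $\psi$ is parallel but signed: $g(x)\coloneqq\psi(x,1-x)=x\log(x/(1-x))$ is bounded below yet not nonnegative, so the computation takes place in $\widehat{B}_{\mathrm{lb}}$. The second identity in \eqref{EqA1} makes the integrand $g(x_1)x_2+x_1g(x_2)$. With $G_n\coloneqq\min\{g,n\}\nearrow g$ (bounded below by $\min g$ and above by $n$) one has $G_n(x_1)x_2+x_1G_n(x_2)\nearrow g(x_1)x_2+x_1g(x_2)$, and the product spectral representation becomes $G_n(R_1)\otimes R_2+R_1\otimes G_n(R_2)$. Since $T_i^*R_iT_i=A_i$ and $T_i^*\psi(R_i,S_i)T_i=\psi(A_i,B_i)$, applying $T^*(\,\cdot\,)T$ produces $T_1^*G_n(R_1)T_1\otimes A_2+A_1\otimes T_2^*G_n(R_2)T_2$, whose pairing with $\rho_1\otimes\rho_2$ equals $\rho_1(T_1^*G_n(R_1)T_1)\,\rho_2(A_2)+\rho_1(A_1)\,\rho_2(T_2^*G_n(R_2)T_2)$. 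As $\rho_i(A_i)<\infty$ and each summand is monotone in $n$ with limit in $(-\infty,+\infty]$, the $n\to\infty$ limit splits into the two terms of the second line of \eqref{EqA2}.

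The algebraic heart, namely the factorization (resp.\ splitting) of the integrand, is immediate from \eqref{EqA1}; the real work lies in the limits and the bookkeeping at the singular set. The points to nail down are: the extended operator homogeneity $\phi(A_1\otimes A_2,B_1\otimes B_2)=T^*\phi(R,S)T$ together with the monotone-limit description of $\widehat{T}(\rho_1\otimes\rho_2)$ for an element of $\widehat{B}_{\mathrm{lb}}$ (both from \cite{HUW22}); the consistent use of $\infty\cdot0=0$, which is precisely where the genuinely $+\infty$ part of the answer is produced (the contribution of $E_i(\{1\})$); and, for $\psi$, the signedness, where one must keep uniform control of the lower bound of $G_n$ so that all pairings stay in $(-\infty,+\infty]$ and verify that no $\infty-\infty$ arises when splitting the limit of the sum. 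The latter is guaranteed because $\psi$ is bounded below and $\rho_i(A_i)\ge0$, but it is this signed monotone-limit step, rather than the underlying algebra, that I expect to be the principal obstacle.
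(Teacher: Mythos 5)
Your proposal is correct and takes essentially the same route as the paper's own proof: both reduce, via the extended operator homogeneity, to the product spectral measure $E_1(dx_1)\otimes E_2(dx_2)$ of $R_{A_1,B_1}\otimes R_{A_2,B_2}$, split the integrand by \eqref{EqA1}, and factor the pairing with $\rho_1\otimes\rho_2$ into the marginal pairings. The only difference is a routine one of implementation: the paper factors the pushed-forward measure as $(T_{A_1,B_1}\rho_1 T_{A_1,B_1}^*)(E_1(dx_1))\,(T_{A_2,B_2}\rho_2 T_{A_2,B_2}^*)(E_2(dx_2))$ and invokes the Fubini theorem directly, while you achieve the same monotone-convergence bookkeeping (including the $\infty\cdot 0=0$ convention and the lower bound for $\psi$) through the bounded truncations $F_n$, $G_n$.
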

\begin{proof}
The proof is essentially the same as that of Theorem \ref{T2.4}, and we will prove only the second formula. 

We will use the notation in the proof of Theorem \ref{T2.4}. We have
\begin{align*}
&\psi (A_1\otimes A_2, B_1\otimes B_2)(\rho_1\otimes\rho_2) \\
&=
\int_0^1 \int_0^1 \psi(x_1 x_2, (1-x_1)(1-x_2))\,(T(\rho_1\otimes\rho_2)T^*)(E_1(dx_1)\otimes E_2(dx_2)) \\
&=
\int_0^1 \int_0^1 \psi(x_1 x_2, (1-x_1)(1-x_2))\,(T_{A_1,B_1}\rho_1 T_{A_1,B_1}^*)(E_1(dx_1))\,(T_{A_2,B_2}\rho_2 T_{A_2,B_2}^*)(E_2(dx_2)) \\
&=
\int_0^1\psi(x_1,1-x_1)\, (T_{A_1,B_1}\rho_1 T_{A_1,B_1}^*)(E_1(dx_1))\,\int_0^1 x_2\,(T_{A_2,B_2}\rho_2 T_{A_2,B_2}^*)(E_2(dx_2)) \\
&\qquad+ 
\int_0^1 x_1\,(T_{A_1,B_1}\rho_1 T_{A_1,B_1}^*)(E_1(dx_1))\, \int_0^1\psi(x_2,1-x_2)\, (T_{A_2,B_2}\rho_2 T_{A_2,B_2}^*)(E_2(dx_2)) 
\end{align*}
by \eqref{EqA1} and the Fubini theorem. This is nothing but the right-hand side of the desired formula.
\end{proof}

Assume further that the above $A_i, B_i$, $i=1,2$, are of trace class. As in \cite[Proposition 7.11]{HUW22} together with (the proof of) \cite[Proposition 1.10]{Haagerup79-1} we have 
\begin{align*}
\mathrm{Tr}(\phi_\alpha(A_1\otimes A_2,B_1\otimes B_2))
&=
\mathrm{Tr}(\phi_\alpha(A_1,B_1))\,\mathrm{Tr}(\phi_\alpha(A_2,B_2)), \\
\mathrm{Tr}(\psi(A_1\otimes A_2,B_1\otimes B_2)) 
&=
\mathrm{Tr}(\psi(A_1,B_1))\,\mathrm{Tr}(A_2) + \mathrm{Tr}(A_1)\,\mathrm{Tr}(\psi(A_2,B_2)).
\end{align*}
These formulas are probably well known among specialists of quantum information theory, and suggest that PW-functional calculus is much useful to treat some kinds of binary operations for positive operators. 

In closing of this appendix, we would like to ask whether or not $\phi_\alpha(A_1\otimes B_1,A_2\otimes B_2)$ and $\psi(A_1\otimes B_1, A_2\otimes B_2)$ are uniquely determined by \eqref{EqA2}. We believe that this question allows an affirmative answer from the viewpoint of tensor products of operator valued weights, but we have no proof about it at the present moment. 

\section{Continuity of PW-functional calculus}

We will reformulate the assertion of Theorem \ref{T3.8} in an abstract fashion. Let $\phi_n(x,y)$ be a sequence of extended real-valued homogeneous Borel functions over $[0,\infty)^2$, each of which is bounded from below on any compact subsets, and let $\phi(x,y)$ be an extended real-valued homogeneous Borel functions over $[0,\infty)^2$, which is bounded from below on any compact subsets. Assume that 
\[
\lim_{n\to\infty}\int_0^1 \phi_n(x,1-x)\,\nu(dx) = \int_0^1 \phi(x,1-x)\,\nu(dx) \quad \text{(possibly $+\infty=+\infty$)}
\]
for all finite Borel measures $\nu$ on $[0,1]$. Then we have 
\begin{align*}
\phi_n(A,B)(\rho) 
&= 
\int_0^1 \phi_n(x,1-x)\,(T_{A,B}\rho T_{A,B}^*)(E_{R_{A,B}}(dx)) \\
&\to 
\int_0^1 \phi(x,1-x)\,(T_{A,B}\rho T_{A,B}^*)(E_{R_{A,B}}(dx)) 
=
\phi(A,B)(\rho)
\end{align*}
as $n\to\infty$ for all $\rho \in B(\mathcal{H})_*^+$, where $E_{R_{A,B}}$ denotes the spectral (projection-valued) measure of $R_{A,B}$. 

Now, we further suppose that all the $\phi_n(x,y)$ and $\phi(x,y)$ are bounded on any compact subsets. In this case, all the $\phi_n(A,B)$ and $\phi(A,B)$ fall into $B(\mathcal{H})$. Assume here that $\phi_n(x,y)\to\phi(x,y)$ pointwisely as $n\to \infty$ and that
\[
\sup_n \sup_{x \in [0,1]} |\phi_n(x,1-x)| < +\infty,
\]
or equivalently, the $\phi_n(x,y)$ are bounded uniformly in $n$ on each compact subset thanks to the homogeneity. Then we have, by the bounded convergence theorem,  
\begin{align*}
&\Vert(\phi_n(R_{A,B},S_{A,B})-\phi(R_{A,B},S_{A,B}))\xi\Vert_{\mathcal{H}_{A,B}}^2 \\
&=
\int_0^1 |\phi_n(x,1-x)-\phi(x,1-x)|^2\,\Vert E_{R_{A,B}}(dx)\xi\Vert^2 _{\mathcal{H}_{A,B}} \to 0
\end{align*}
as $n\to\infty$ for all $\xi \in \mathcal{H}_{A,B}$. By Remarks \ref{R2.3}(2) we obtain that $\phi_n(A,B)=\Gamma_{A,B}(\phi_n(R_{A,B},S_{A,B})) \to \Gamma_{A,B}(\phi(R_{A,B},S_{A,B})) = \phi(A,B)$ in the strong operator topology as $n\to\infty$. This is indeed an abstraction of Theorem \ref{T3.8}.

\end{document}